\DeclareSymbolFont{cyrletters}{OT2}{wncyr}{m}{n}
\DeclareMathSymbol{\Sha}{\mathalpha}{cyrletters}{"58}
\setlist[itemize]{leftmargin=*}
\setlist[enumerate]{leftmargin=*,label=\rm{(\arabic*)}}
\newtheorem{theorem}{Theorem}
\newtheorem{lemma}[theorem]{Lemma}
\newtheorem{proposition}[theorem]{Proposition}
\newtheorem*{remark}{Remark}
\theoremstyle{remark}
\numberwithin{theorem}{section} \numberwithin{equation}{section}
\numberwithin{figure}{section}
\newcommand{\R}{\mathbb R}
\newcommand{\N}{\mathbb N}
\newcommand{\C}{\mathbb C}
\newcommand{\Z}{\mathbb{Z}}
\newcommand{\Ec}{\mathcal{E}}
\newcommand{\Oc}{\mathcal{O}}
\renewcommand{\a}{\alpha}
\renewcommand{\b}{\beta}
\renewcommand{\d}{\delta}
\newcommand{\e}{\varepsilon}
\newcommand{\g}{\gamma}
\renewcommand{\l}{\lambda}
\newcommand{\del}{\partial}
\newcommand{\p}{\varrho}
\renewcommand{\t}{\tau}
\renewcommand{\th}{\theta}
\newcommand{\vth}{\vartheta}
\newcommand{\z}{\zeta}
\newcommand{\De}{\Delta}
\newcommand{\Th}{\Theta}
\newcommand{\sm}{\setminus}
\newcommand{\sgn}{\operatorname{sgn}}
\newcommand{\erfc}{\operatorname{erfc}}
\newcommand{\wt}[1]{\widetilde{#1}}
\newcommand{\ol}[1]{\overline{#1}}
\newcommand{\flo}[1]{\lfloor #1\rfloor}
\newcommand{\pmat}[1]{\left(\begin{smallmatrix}#1\end{smallmatrix}\right)}
\newcommand{\leg}[2]{\left(\frac{#1}{#2}\right)}
\newcommand{\tleg}[2]{\left(\tfrac{#1}{#2}\right)}
\newcommand{\pd}[2]{\frac{\del^{#2}}{\del #1^{#2}}}
\renewcommand{\pmod}[1]{\ \left( \mathrm{mod} \, #1 \right)}
\newcommand{\Pmod}[1]{\ ( \mathrm{mod} \, #1 )}
\newcommand{\ou}{\mathrm{ou}}
\title{Odd unimodal sequences}
\author{Kathrin Bringmann}
\address{University of Cologne, Department of Mathematics and Computer Science, Weyertal 86-90, 50931 Cologne, Germany}
\email{kbringma@math.uni-koeln.de}
\author{Jeremy Lovejoy}
\address{CNRS, Universit{\'e} Paris Cit\'e, B\^atiment Sophie Germain, Case courier 7014,
8 Place Aur\'elie Nemours, 75205 Paris Cedex 13, FRANCE}
\email{lovejoy@math.cnrs.fr}
\date{\today}
\subjclass[2020]{11F03, 11F37, 11P82, 11P83, 33D15}
\keywords{asymptotics, Euler--Maclaurin summation formula, mock theta functions, modular forms, partitions, Tauberian theorems, unimodal sequences}
\begin{document}

\begin{abstract}
	In this paper we study odd unimodal and odd strongly unimodal sequences. We use $q$-series methods to find several fundamental generating functions. Employing the Euler--Maclaurin summation formula we obtain the asymptotic main term for both types of sequences.  We also find families of congruences modulo $4$ for the number of odd strongly unimodal sequences.
\end{abstract}
\maketitle

\section{Introduction and statement of results}

A sequence is {\it unimodal} if it is weakly increasing up to a point and then weakly decreasing thereafter. Let $u(n)$ denote the number of unimodal sequences of natural numbers having the form 
\begin{equation}\label{unimodal}
	a_1 \leq \cdots \leq a_r \leq \overline{c} \geq b_1 \geq \cdots \geq b_s, 
\end{equation}
with
\[
	n = c + \sum_{j=1}^r a_j + \sum_{j=1}^s b_j.
\]
The distinguished point $c$ is called the {\it peak} of the sequence and the sum of the entries $n$ is called the {\it weight}. For example, we have $u(4)=12$, the twelve unimodal sequences of weight $4$ being
\begin{equation}\label{u(4)}
	\begin{gathered}
		\left(\ol4\right), \left(1,\ol3\right), \left(\ol3,1\right), \left(2,\ol2\right), \left(1,1,\ol2\right), \left(1,\ol2,1\right), \left(\ol2,2\right), \left(\ol2,1,1\right),\\
		\left(1,1,1,\ol1\right), \left(1,1,\ol1,1\right), \left(1,\ol1,1,1\right), \left(\ol1,1,1,1\right).
	\end{gathered}
\end{equation}

A unimodal sequence is {\it strongly unimodal} if the inequalities in \eqref{unimodal} are strict. Let $u^*(n)$ denote the number of strongly unimodal sequences of natural numbers with weight $n$. For example, we have $u^*(4)=4$, the four strongly unimdodal sequences of weight $4$ being\footnote{For strongly unimodal sequences we drop the overline notation since the peak cannot repeat.}
\begin{equation}\label{u^*(4)}
	(4), (1,3), (3,1), (1,2,1).
\end{equation}

Unimodal sequences and strongly unimodal sequences have been the subject of a considerable amount of research, especially over the last two decades. The generating functions for these sequences are related to number-theoretic objects like mock theta functions, false theta functions, and quantum modular forms, whose theories can then be applied to deduce many interesting results (see Subsections 14.4, 15.7, and 21.4 of \cite{Br-Fo-On-Ro}; for some more recent work, see \cite{Br-Na,Ch-Ga}).

In this paper we initiate the study of odd unimodal sequences and odd strongly unimodal sequences, wherein all numbers must be odd.\footnote{These were briefly treated in \cite{An2}, where they were called convex and strictly convex compositions with odd parts.} Let ${\rm ou}(n)$ and ${\rm ou}^*(n)$ denote the number of odd unimodal and odd strongly unimodal sequences of weight $n$. Continuing the example in \eqref{u(4)} and \eqref{u^*(4)}, we have 
${\rm ou}(4) = 6$ and ${\rm ou}^*(4) = 2$.

 We begin by computing the generating functions upon which all of our work is based. We use two-variable generating functions, where the second variable tracks the rank of the unimodal sequence. In the notation of \eqref{unimodal}, the rank is defined to be $r-s$, or the number of parts of the sequence to the left of the peak minus the number to the right. From the perspective of modular forms, this second variable is the ``Jacobi variable''. We use the standard $q$-hypergeometric notation, 
\begin{equation}\label{qPoch}
	(a;q)_n := \prod_{j=0}^{n-1} \left(1-aq^j\right),\qquad (a_1,\dots,a_\ell;q)_n := (a_1;q)_n\cdots(a_\ell;q)_n
\end{equation}
valid for $\ell\in\N$, $a,a_1,\dots,a_\ell\in\C$ and $n\in\N_0\cup\{\infty\}$. Let ${\rm ou}(m,n)$ denote the number of odd unimodal sequences of weight $n$ with rank $m$.

\begin{theorem}\label{T:umngf}
	We have
	\begin{align}\label{umngf1}
		&\sum_{\substack{n\ge0\\m\in\Z}} \textnormal{ou}(m,n)\z^mq^n = \sum_{n\ge0} \frac{q^{2n+1}}{\left(\z q,\z^{-1}q;q^2\right)_{n+1}}\\
		\label{umngf2}
		&\hspace{.99cm}= \sum_{n\ge0} (-1)^{n+1}\z^{3n+1}q^{3n^2+2n}\left(1+\z q^{2n+1}\right) + \frac{1}{\left(\z q,\z^{-1}q;q^2\right)_\infty}\sum_{n\ge0} (-1)^n\z^{2n+1}q^{n^2+n}\\
		\label{umngf3}
		&\hspace{.99cm}= \frac{q}{\left(q^2;q^2\right)_\infty}\left(\sum_{n,r\ge0}-\sum_{n,r<0}\right) \frac{(-1)^{n+r}q^{n^2+3n+4rn+r^2+3r}}{1-\z q^{2r+1}}.
	\end{align}
\end{theorem}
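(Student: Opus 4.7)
For \eqref{umngf1} a direct combinatorial argument suffices. Conditioning on the peak $c=2n+1$ for $n\ge 0$, the entries to the left of the peak form a partition into odd parts $\le 2n+1$, with generating function $\prod_{j=0}^{n}(1-\zeta q^{2j+1})^{-1}=1/(\zeta q;q^{2})_{n+1}$ (where $\zeta$ marks the number of parts and hence contributes $\zeta^{r}$ to the rank), and symmetrically the entries to the right contribute $1/(\zeta^{-1}q;q^{2})_{n+1}$. Multiplying by the peak weight $q^{2n+1}$ and summing over $n\ge 0$ yields \eqref{umngf1}.

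To obtain \eqref{umngf3} I apply partial fractions in $\zeta$ to each summand in \eqref{umngf1}. A residue computation should show
\[
\frac{1}{(\zeta q,\zeta^{-1}q;q^{2})_{n+1}} = \sum_{r=-n-1}^{n}\frac{(-1)^{r}q^{r^{2}+r}}{(q^{2};q^{2})_{n-r}(q^{2};q^{2})_{n+r+1}}\cdot\frac{1}{1-\zeta q^{2r+1}},
\]
the poles for $r\ge 0$ coming from $(\zeta q;q^{2})_{n+1}$ and those for $r=-s-1\le-1$ (with $s\ge 0$) from $(\zeta^{-1}q;q^{2})_{n+1}$. Multiplying by $q^{2n+1}$, summing over $n\ge 0$, interchanging orders, and reindexing $n=m+r$ (for $r\ge 0$) or $n=m-r-1$ (for $r\le -1$) with $m\ge 0$, the inner $m$-sum reduces to $\sum_{m\ge 0}q^{2m}/((q^{2};q^{2})_{m}(q^{2};q^{2})_{m+2k+1})$ with $k\in\{r,-r-1\}$. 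Two applications of the Euler identities $(x;q)_{\infty}=\sum_{n\ge 0}(-x)^{n}q^{\binom{n}{2}}/(q;q)_{n}$ and $\sum_{n\ge 0}z^{n}/(q;q)_{n}=1/(z;q)_{\infty}$ collapse this inner sum to $(q^{2};q^{2})_{\infty}^{-1}\sum_{n\ge 0}(-1)^{n}q^{n^{2}+(4k+3)n}$. Reassembling, the $r\ge 0$ contributions give the first piece of \eqref{umngf3}, while the $r\le -1$ contributions (after applying $1/(1-\zeta q^{-(2s+1)})=-\zeta^{-1}q^{2s+1}/(1-\zeta^{-1}q^{2s+1})$ and the reflection $(n,r)\mapsto(-n-1,-r-1)$) produce the second piece.

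Finally, for \eqref{umngf2} I would invoke a $q$-hypergeometric transformation of the series \eqref{umngf1}, analogous to the classical Appell--Lerch/mock-theta decomposition of the ordinary unimodal rank generating function (cf.\ \S15.7 of \cite{Br-Fo-On-Ro}). Equivalently, starting from \eqref{umngf3}, one can expand the geometric series $1/(1-\zeta q^{2r+1})=\sum_{\ell\ge 0}\zeta^{\ell}q^{(2r+1)\ell}$ and close one of the resulting sums via Jacobi's triple product; this produces a factor $(\zeta q,\zeta^{-1}q;q^{2})_{\infty}$ that combines with the prefactor $(q^{2};q^{2})_{\infty}^{-1}$ to give the $1/(\zeta q,\zeta^{-1}q;q^{2})_{\infty}$ of the second term, paired with the partial theta $\sum_{n\ge 0}(-1)^{n}\zeta^{2n+1}q^{n^{2}+n}$. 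The surviving boundary contribution assembles into the first partial theta of \eqref{umngf2}, the exponent $3n^{2}+2n$ arising naturally from completing the square in the indefinite form $n^{2}+3n+4rn+r^{2}+3r$ after the geometric-series expansion. The main obstacle will be the bookkeeping in this last step: diagonalizing the indefinite quadratic form cleanly, tracking signs and boundary terms from the geometric-series expansion, and matching the surviving one-variable partial thetas with the precise exponents $3n^{2}+2n$ and $n^{2}+n$ and $\zeta$-powers claimed in \eqref{umngf2}.
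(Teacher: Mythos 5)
Your proof of \eqref{umngf1} is the same combinatorial argument as the paper's. For \eqref{umngf3} you take a genuinely different route: the paper inserts the Kim--Lovejoy Bailey pair \eqref{umn} into the conjugate Bailey pair identity \eqref{umnBaileylemma} and then applies the partial-fraction step \eqref{parfrac}, whereas you expand each summand of \eqref{umngf1} into partial fractions in $\zeta$ first and then collapse the resulting inner sum with the two Euler identities. Your partial-fraction identity is correct (I checked the residues at $\zeta=q^{-(2r+1)}$ and $\zeta=q^{2s+1}$, and the function vanishes at $\zeta=0,\infty$ so no polynomial part is needed), and the final reflection $(n,r)\mapsto(-n-1,-r-1)$ matches the paper's. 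Your approach is elementary and self-contained; the paper's is shorter once the Bailey pair is quoted and sits inside a machinery that produces \eqref{umnstargf3} and \eqref{umnstargf4} by the same template. One computational slip: the inner sum collapses to
\[
\sum_{m\ge0}\frac{q^{2m}}{\left(q^2;q^2\right)_m\left(q^2;q^2\right)_{m+2k+1}}=\frac{1}{\left(q^2;q^2\right)_\infty^2}\sum_{j\ge0}(-1)^jq^{j^2+(4k+3)j},
\]
with the \emph{square} of $(q^2;q^2)_\infty$ in the denominator (the factor $(q^2;q^2)_j$ from expanding $(q^{2m+4k+4};q^2)_\infty$ cancels against $(q^{2j+2};q^2)_\infty$ coming from the $m$-sum, leaving a second full $(q^2;q^2)_\infty$). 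Carrying this through gives the prefactor $q/(q^2;q^2)_\infty^2$, which is exactly what the last display of the paper's own proof of \eqref{umngf3} produces; a numerical check at $\zeta=1$ (e.g.\ $\mathrm{ou}(3)=4$) confirms the squared prefactor is the correct one.

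The genuinely incomplete part is \eqref{umngf2}. The paper does not prove it either: it is quoted as Entry 6.3.4 of Ramanujan's lost notebook \cite{An-Be1}, and your first option (invoking a known transformation) amounts to the same citation, so you should simply name that entry. Your alternative sketch --- expanding $1/(1-\zeta q^{2r+1})$ geometrically in \eqref{umngf3} and closing a sum by the Jacobi triple product --- is not carried out, and it is not clear it closes as described: the geometric expansion is only valid termwise for $|\zeta q^{2r+1}|<1$, which fails for $r<0$, so the two halves of the indefinite sum need different expansions and the resulting boundary terms do not obviously reassemble into the single partial theta $\sum_{n\ge0}(-1)^{n+1}\zeta^{3n+1}q^{3n^2+2n}(1+\zeta q^{2n+1})$. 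As written this step is a plan rather than a proof; cite the lost notebook entry as the paper does, or supply the triple-product bookkeeping in full.
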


Let $\ou^*(m,n)$ denote the number of odd strongly unimodal sequences of weight $n$ with rank $m$.

\begin{theorem}\label{T:umnstargf}
	We have, with $Q(r,s):=\frac{r^2}{4}+\frac{7rs}{2}+\frac{s^2}{4}+\frac{3r}{2}+\frac{5s}{2}+1$,
	\begin{align}\label{umnstargf1}
		\sum_{\substack{n\ge0\\m\in\Z}} \ou^*(m,n)\z^mq^n &= \sum_{n\ge0} \left(-\z q,-\z^{-1}q;q^2\right)_nq^{2n+1}\\
		\label{umnstargf2}
		&= -\frac{1}{\left(q^2;q^2\right)_\infty}\sum_{n\in\Z} \frac{(-1)^nq^{3n^2+3n+1}}{1+\z q^{2n+1}} + \frac{1}{\left(q^2;q^2\right)_\infty}\sum_{n\in\Z} \frac{\z^{-n}q^{n^2+2n+1}}{1+\z q^{2n+1}}\\
		\label{umnstargf3}
		&= \frac{q}{\left(q^2;q^2\right)_\infty}\left(\sum_{n,r\ge0}-\sum_{n,r<0}\right) (-1)^n\z^rq^{3n^2+3n+4nr+r^2+2r}\\
		\label{umnstargf4}
		&= \frac{\left(-\z q,-\z^{-1}q;q^2\right)_\infty}{\left(q^2;q^2\right)_\infty^2}\left(\sum_{\substack{r,s\ge0\\r\equiv s\Pmod2}}-\sum_{\substack{r,s<0\\r\equiv s\Pmod2}}\right) \frac{(-1)^\frac{r-s}{2}q^{Q(r,s)}}{1+\z q^{r+s+1}}.
	\end{align}
\end{theorem}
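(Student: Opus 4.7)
The identity \eqref{umnstargf1} follows directly from the combinatorial definition. Indexing an odd strongly unimodal sequence by its peak $2n+1$, the strictly increasing odd parts to the left of the peak form a subset of $\{1,3,\ldots,2n-1\}$, contributing $(-\z q;q^2)_n$, with each chosen part carrying a factor of $\z$ (from the rank). Likewise the strictly decreasing odd parts to the right contribute $(-\z^{-1}q;q^2)_n$. Multiplying by $q^{2n+1}$ for the peak and summing over $n\ge 0$ yields \eqref{umnstargf1}.

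For the Hecke-type form \eqref{umnstargf3}, I would expand each Pochhammer symbol via the $q$-binomial theorem
\[
	(-\z q;q^2)_n = \sum_{j\ge 0}\begin{bmatrix}n\\j\end{bmatrix}_{q^2}\z^j q^{j^2},\qquad (-\z^{-1}q;q^2)_n = \sum_{k\ge 0}\begin{bmatrix}n\\k\end{bmatrix}_{q^2}\z^{-k}q^{k^2},
\]
substitute into \eqref{umnstargf1}, and interchange the order of summation. After setting $r=j-k$ and evaluating the resulting inner series over $n$ via a Bailey-type identity at base $q^2$ (equivalently, a suitable limiting case of Watson's $q$-analog of Whipple's transformation), the sum should collapse to $1/(q^2;q^2)_\infty$ times an indefinite double sum with the sign pattern $(n,r\ge 0)-(n,r<0)$ characteristic of Hecke-type series, yielding \eqref{umnstargf3}.

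The Appell-Lerch form \eqref{umnstargf2} is then obtained from \eqref{umnstargf3} by applying the Hickerson-Mortenson indefinite-theta decomposition, which rewrites the indefinite Hecke double sum (with $b^2-4ac=4>0$ for the quadratic $3n^2+4nr+r^2$) as a sum of two Appell-Lerch series. The two summands in \eqref{umnstargf2}, with exponents $3n^2+3n+1$ and $n^2+2n+1$, are the companion quadratics produced by the decomposition. For \eqref{umnstargf4}, I would multiply \eqref{umnstargf2} by the Jacobi triple product
\[
	(-\z q,-\z^{-1}q,q^2;q^2)_\infty = \sum_{m\in\Z}\z^m q^{m^2},
\]
and re-express the resulting product of Lambert-type series and theta function as a double indefinite sum; the quadratic exponent $Q(r,s)$ arises by combining the two quadratics, and the parity condition $r\equiv s\pmod 2$ together with the sign split $(r,s\ge 0)-(r,s<0)$ emerge from this bookkeeping. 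This is an instance of the standard technique of converting Appell-Lerch sums into indefinite Hecke-type double sums by multiplication by a theta function.

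\textbf{Main obstacle.} The derivation of the Hecke-type form \eqref{umnstargf3} from \eqref{umnstargf1} is the most delicate step, requiring identification of the correct Bailey pair (or equivalent $q$-series transformation) at base $q^2$. Once the Hecke form is in hand, \eqref{umnstargf2} and \eqref{umnstargf4} follow from standard Hickerson-Mortenson and Jacobi triple product manipulations, though the parity and sign bookkeeping in \eqref{umnstargf4} demands careful attention.
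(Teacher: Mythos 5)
Your proof of \eqref{umnstargf1} is correct and is exactly the paper's argument. The other three identities, however, are only sketched, and in each case the missing step is precisely the nontrivial content; the paper also takes a different route for \eqref{umnstargf2} and \eqref{umnstargf4} than the one you propose.

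For \eqref{umnstargf3}, expanding both Pochhammer symbols by the $q$-binomial theorem produces a triple sum over $n,j,k$ whose quadratic form $j^{2}+k^{2}$ is positive definite; the target is an indefinite form $3n^{2}+4nr+r^{2}$ with the sign split $(\sum_{n,r\ge0}-\sum_{n,r<0})$. The passage from one to the other is the whole difficulty, and ``a Bailey-type identity at base $q^2$ (equivalently, a limiting case of Watson's transformation)'' does not identify it. The paper instead inserts a specific Bailey pair relative to $(1,q^{2})$ (Andrews' pair with $\a_n=(-1)^n(w^nq^{n(n-1)/2}+w^{-n}q^{n(n+1)/2})$, $\b_n=(w,q/w;q)_n/(q;q)_{2n}$, specialized at $w=-\z q$) into Lovejoy's conjugate Bailey pair lemma $\sum_{n\ge0}(aq;q)_{2n}q^n\b_n=\frac{1}{(q;q)_\infty}\sum_{n,r\ge0}(-a)^nq^{3n(n+1)/2+(2n+1)r}\a_r$, after which the sign split comes from the substitution $(n,r)\mapsto(-n-1,-r)$. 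Without naming such a pair and lemma, the claim that ``the sum should collapse'' is unsupported.

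For \eqref{umnstargf2}, a Hickerson--Mortenson decomposition of the Hecke-type sum is a plausible alternative strategy, but it is not carried out: that theorem expresses $f_{a,b,c}$ as Appell--Lerch sums \emph{plus} a theta-quotient correction, and you neither identify the parameters nor check that the correction term disappears or combines into the two stated sums. The paper proves \eqref{umnstargf2} independently of \eqref{umnstargf3}, by specializing Bailey's bilateral ${}_2\psi_2$ transformation to split $\sum_{n\in\Z}(-\z q,-\z^{-1}q;q^2)_nq^{2n+1}$ into the $n\ge0$ piece (the generating function) and the $n\le-1$ piece, which via $(a;q)_{-n}$ becomes $\sum_{n\ge0}q^{2n^2+2n+1}/(-\z q,-\z^{-1}q;q^2)_{n+1}$ and is then evaluated by a known identity. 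For \eqref{umnstargf4}, note that the right-hand side is not a pure Hecke-type theta: it retains the Lambert-type denominator $1+\z q^{r+s+1}$, so it cannot arise simply by multiplying \eqref{umnstargf2} by the triple product and regrouping; some mechanism must convert the denominators $1+\z q^{2n+1}$ into $1+\z q^{r+s+1}$ and produce the parity condition $r\equiv s\Pmod 2$. The paper obtains \eqref{umnstargf4} from yet another Bailey pair (relative to $(q,q^2)$, from Andrews' fifth- and seventh-order mock theta function paper), a partial-fraction identity, and the change of variables $(n,j)=(\frac{r+s}{2},\frac{r-s}{2})$. In short: your outline identifies reasonable strategies, but for \eqref{umnstargf2}--\eqref{umnstargf4} the specific identities that make each step work are absent, so the proposal does not yet constitute a proof.
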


\begin{remark}
	The analogue of \Cref{T:umngf} for ordinary unimodal sequences is \cite[Proposition 2.1]{Ki-Lo1}.  For strongly unimodal sequences, the corresponding generating functions are dispersed in the literature. The analogues of \eqref{umnstargf1} and \eqref{umnstargf2} are \cite[equation (14.18)]{Br-Fo-On-Ro} and \cite[Lemma 3.1]{Br-Fo-Rh}, the analogues of \eqref{umnstargf3} and \eqref{umnstargf4} are Theorems 4.1 and 1.3 of \cite{Hi-Lo1}. 
\end{remark}

Our first use of these generating functions is to find asymptotic estimates for ${\rm ou}(n)$ and ${\rm ou}^*(n)$.


\begin{theorem}\label{T:theo3}
	We have, as $n\to\infty$,
	\begin{equation} \label{ou(n)asymptotic}
		{\rm ou}(n) \sim \frac{e^{\pi\sqrt\frac{2n}{3}}}{2^\frac{13}{4}3^\frac14n^\frac34}.
	\end{equation}
\end{theorem}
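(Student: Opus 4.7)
My plan is to specialize \eqref{umngf2} at $\z=1$, compute the behavior at the cusp $q\to 1^-$, and then invoke Ingham's Tauberian theorem. Set $q=e^{-t}$ with $t>0$ and let $U(q):=\sum_{n\geq 0}\ou(n)q^n$. Specializing \eqref{umngf2} at $\z=1$ gives the decomposition
\[
U(q) \;=\; F(q) \;+\; \frac{T(q)}{(q;q^2)_\infty^{2}},
\]
with the false theta $F(q):=\sum_{n\geq 0}(-1)^{n+1}q^{3n^2+2n}(1+q^{2n+1})$ and the partial theta $T(q):=\sum_{n\geq 0}(-1)^{n}q^{n^2+n}$. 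The exponential growth of $U(e^{-t})$ as $t\to 0^+$ will come entirely from the modular prefactor $(q;q^2)_\infty^{-2}$; the non-modular pieces $F$ and $T$ are expected to contribute only bounded constants at the cusp.

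For the cusp asymptotic of the prefactor, from $(q;q^2)_\infty=(q;q)_\infty/(q^2;q^2)_\infty$ and the standard modular transformation of the Dedekind $\eta$-function one obtains
\[
(q;q^2)_\infty \;\sim\; \sqrt{2}\,e^{-\pi^{2}/(12t)}, \qquad \text{hence} \qquad \frac{1}{(q;q^2)_\infty^{2}} \;\sim\; \tfrac{1}{2}\,e^{\pi^{2}/(6t)}.
\]
Next I would handle $T$ and $F$ via the Euler--Maclaurin summation formula announced in the introduction. Splitting $T$ into the even- and odd-indexed subseries $\sum_{k\geq 0}e^{-t(4k^2+2k)}$ and $\sum_{k\geq 0}e^{-t(4k^2+6k+2)}$, Euler--Maclaurin applied to each yields a divergent leading term of order $1/\sqrt{t}$ that cancels in the difference, together with constants that combine to give $T(e^{-t})=\tfrac12+O(\sqrt t)$. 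The same technique applied to the two sub-sums making up $F$ shows $F(e^{-t})=O(1)$. Combining,
\[
U(e^{-t}) \;\sim\; \tfrac{1}{4}\,e^{\pi^{2}/(6t)} \qquad (t\to 0^{+}).
\]

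The final step is Ingham's Tauberian theorem. The sequence $\ou(n)$ is nondecreasing (prepending a part $1$ defines an injection from odd unimodal sequences of weight $n-1$ to those of weight $n$), so Ingham's theorem applies with $\lambda=\tfrac14$, $\alpha=0$, $A=\pi^2/6$ and gives
\[
\ou(n) \;\sim\; \frac{\lambda A^{1/4}}{2\sqrt\pi\,n^{3/4}}\,e^{2\sqrt{An}} \;=\; \frac{1}{8\cdot 6^{1/4}\,n^{3/4}}\,e^{\pi\sqrt{2n/3}},
\]
which matches \eqref{ou(n)asymptotic} via the identity $8\cdot 6^{1/4}=2^{13/4}\,3^{1/4}$. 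The main obstacle is the Euler--Maclaurin analysis of $T$ and $F$: one must capture the \emph{exact} limiting value $T(e^{-t})\to\tfrac12$, since the divergent $1/\sqrt t$ contributions from the two subseries of $T$ (and similarly for $F$) must be shown to cancel precisely, and any error in the resulting constant feeds directly into the leading asymptotic.
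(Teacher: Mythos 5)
Your proposal follows essentially the same route as the paper: the same specialization of \eqref{umngf2} at $\z=1$, the same $\eta$-asymptotic for the prefactor, the same Euler--Maclaurin cancellation giving $T(e^{-t})\to\frac12$ and $F(e^{-t})=O(1)$, the same monotonicity injection, and the same Tauberian conclusion with $\l=\frac14$, $\g=\frac{\pi^2}{6}$. The only point to tighten is that the Tauberian theorem used here (Proposition 2.1) also requires the bound $B(e^{-z})\ll e^{\g/|z|}$ for complex $z$ in cones $|y|\le\De x$, not just the asymptotic on the real axis; since the Euler--Maclaurin estimates are uniform in such cones this is automatic, but it should be stated.
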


\begin{theorem}\label{T:theo2}
	We have, as $n\to\infty$,
	\begin{equation} \label{ou^*(n)asymptotic}
		{\rm ou}^*(n) \sim \frac{e^{\pi\sqrt\frac n3}}{2^\frac523^\frac14n^\frac34}.
	\end{equation}
\end{theorem}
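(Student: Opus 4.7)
The plan is to compute the asymptotic behavior of $U^*(q):=\sum_{n\ge 0}\ou^*(n)q^n$ as $q=e^{-t}\to 1^-$ and then invoke Ingham's Tauberian theorem. Specializing $\z=1$ in \eqref{umnstargf2} gives
\[
U^*(q)=\frac{-S_1(q)+S_2(q)}{\left(q^2;q^2\right)_\infty},\quad S_1(q):=\sum_{n\in\Z}\frac{(-1)^nq^{3n^2+3n+1}}{1+q^{2n+1}},\quad S_2(q):=\sum_{n\in\Z}\frac{q^{n^2+2n+1}}{1+q^{2n+1}}.
\]
The modular inversion for the Dedekind eta function yields $\left(q^2;q^2\right)_\infty\sim\sqrt{\pi/t}\,e^{-\pi^2/(12t)}$, and since the target exponent $\pi\sqrt{n/3}$ in \eqref{ou^*(n)asymptotic} forces $A=\pi^2/12$ in Ingham's theorem, one anticipates $-S_1+S_2\sim\tfrac{1}{2}\sqrt{\pi/t}$, hence $U^*(e^{-t})\sim\tfrac{1}{2}e^{\pi^2/(12t)}$.

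After the substitution $n\mapsto m-1$ one has $S_2(q)=\sum_{m\in\Z}q^{m^2}/(1+q^{2m-1})$, and decomposing $(1+q^{2m-1})^{-1}=\tfrac{1}{2}+\tfrac{1}{2}\tanh\!\bigl(\tfrac{(2m-1)t}{2}\bigr)$ yields $S_2(q)=\tfrac{1}{2}\sum_{m\in\Z}q^{m^2}+R(q)$, where by Jacobi's theta inversion the first piece is $\sim\tfrac{1}{2}\sqrt{\pi/t}$, and a short Euler--Maclaurin estimate of $R(q)$ (after rescaling $m=x/\sqrt t$, using that the leading Taylor term of $\tanh$ is odd against the Gaussian) gives $R(q)=O(\sqrt t\,)$. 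For $S_1$, pairing $n$ with $-n-1$ folds the bilateral sum into $S_1(q)=\sum_{n\ge 0}(-1)^nq^{3n^2+3n+1}\tanh\!\bigl(\tfrac{(2n+1)t}{2}\bigr)$; first-order Taylor expansion of $\tanh$ combined with Jacobi's identity $\sum_{n\ge 0}(-1)^n(2n+1)q^{3n(n+1)}=\left(q^6;q^6\right)_\infty^3$ and the eta-inversion $\left(q^6;q^6\right)_\infty\sim\sqrt{\pi/(3t)}\,e^{-\pi^2/(36t)}$ gives $S_1(q)=O\!\bigl(t^{-1/2}e^{-\pi^2/(12t)}\bigr)$.

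Combining the pieces, $S_1(q)/\left(q^2;q^2\right)_\infty=O(1)$, subdominant to $S_2(q)/\left(q^2;q^2\right)_\infty\sim\tfrac{1}{2}e^{\pi^2/(12t)}$, so $U^*(e^{-t})\sim\tfrac{1}{2}e^{\pi^2/(12t)}$. Ingham's Tauberian theorem (applicable thanks to the non-negativity, and easily checked weak monotonicity, of $\ou^*(n)$) with $\lambda=\tfrac{1}{2}$, $\alpha=0$, $A=\tfrac{\pi^2}{12}$ then produces $\ou^*(n)\sim\frac{1}{2^{5/2}3^{1/4}n^{3/4}}e^{\pi\sqrt{n/3}}$, matching \eqref{ou^*(n)asymptotic}. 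The main technical obstacle is the uniform control of the two correction terms as $t\to 0^+$: the cancellation of the leading order in $R(q)$ must be made quantitative by a careful Euler--Maclaurin expansion of the Gaussian-against-$\tanh$ sum, and Jacobi's triangular-number identity is crucial so that the naive theta term in $S_1$ vanishes exactly, leaving only the subleading factor $t$ from the Taylor expansion of $\tanh$ that renders $S_1/(q^2;q^2)_\infty$ bounded.
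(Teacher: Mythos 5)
Your route is genuinely different from the paper's: you specialize \eqref{umnstargf2} at $\z=1$ and treat the two Lambert-type series by elementary foldings, the identity $\frac{1}{1+e^{-x}}=\frac12+\frac12\tanh\left(\frac x2\right)$, and classical theta/eta inversion, whereas the paper starts from \eqref{umnstargf3}, realizes the numerator as the holomorphic part of a Zwegers-type indefinite theta function, and extracts the main term from the non-holomorphic complement via two-dimensional Euler--Maclaurin summation. Your radial computation checks out: the folding of $S_1$ and $S_2$, the use of Jacobi's identity to see that $\sum_{n\ge0}(-1)^n(2n+1)q^{3n(n+1)}=(q^6;q^6)_\infty^3$ is exponentially small, and the final constants are all correct, giving $U^*(e^{-t})\sim\frac12e^{\pi^2/(12t)}$. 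Two smaller caveats: first-order Taylor truncation of $\tanh$ does not by itself yield the exponentially small bound you claim for $S_1$ (the truncation error is only polynomially small), though the cruder bound $S_1=O(1)$, hence $S_1/(q^2;q^2)_\infty=o\bigl(e^{\pi^2/(12t)}\bigr)$, already suffices; and since $\ou^*(2)=0<1=\ou^*(1)$, monotonicity only holds from $n\ge3$, which is why the paper proves \Cref{L:oustarmonotonic} rather than treating it as immediate.

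The genuine gap is the Tauberian step. The result you need is \Cref{P:CorToIngham}, and, as the paper's footnote warns, the version of Ingham's theorem that assumes only the asymptotic along the positive real axis (plus monotonicity) is incorrect: one must also verify the second condition in \eqref{E:as1}, namely $U^*(e^{-z})\ll e^{\pi^2/(12|z|)}$ uniformly in every cone $|y|\le\De x$. This is not a formality; the paper devotes the whole of Appendix A to establishing the corresponding bound \eqref{E:second2} for its representation. Your proposal never considers complex $z$. The good news is that your decomposition is well suited to closing this gap, arguably more cheaply than the paper does: $|\tanh(w)|$ is bounded on any closed cone $|\arg w|\le\arctan\De<\frac\pi2$ since its poles lie on the imaginary axis, so $|S_1(e^{-z})|$ and $|S_2(e^{-z})|$ are $\ll\sum_{m\in\Z}e^{-m^2x}\ll x^{-\frac12}\ll_\De|z|^{-\frac12}$, and combining this with $|(q^2;q^2)_\infty|^{-1}\ll|z|^{\frac12}e^{\pi^2/(12|z|)}$ (from modular inversion, using $x\le|z|$) gives exactly the required cone bound with $\b=0$. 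Until that is written down, however, the proof is incomplete.
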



\begin{remark}
	By \cite{Au, Wr2}, the analogue of \eqref{ou(n)asymptotic} for the number of ordinary unimodal sequences $u(n)$ is 
	\begin{equation*}
		u(n) \sim \frac{e^{\pi\sqrt\frac{4n}{3}}}{2^3 3^\frac34 n^\frac54}.
	\end{equation*}
	The analogue of \eqref{ou^*(n)asymptotic} for the number of strongly unimodal sequences $u^*(n)$ is 
	\begin{equation} \label{u^*(n)asymptotic}
		u^*(n) \sim \frac{e^{\pi\sqrt\frac{2n}{3}}}{2^\frac{13}{4}3^\frac14n^\frac34},
	\end{equation}
	due to \cite{Rh}. Note that the asymptotics in \eqref{ou(n)asymptotic} and \eqref{u^*(n)asymptotic} agree.
\end{remark}

As a second result, we prove congruences modulo $4$ for the number of odd strongly unimodal sequences of weight $n$. Here we are motivated by a corresponding result for strongly unimodal sequences, which says that if $\ell\equiv7,11,13,17\Pmod{24}$ is prime and $(\frac j\ell)=-1$, then
\[
	u^*\left(\ell^2 n+\ell j-\left(\tfrac{\ell^2-1}{24}\right)\right) \equiv 0\Pmod4.
\]
This was conjectured by Bryson, Ono, Pitman, and Rhoades \cite{BOPR} and proved by Chen and Garvan \cite{Ch-Ga}. Our analogue for $\ou^*(n)$ is as follows; see \Cref{T:mod4congruences} for a more general theorem.

\begin{theorem}\label{T:mod4}
	Let $\ell\ge5$ be prime. If $\ell\equiv7,13\Pmod{24}$ and $(\frac{3j}{\ell})=-1$ or if $\ell\not\equiv7,13\Pmod{24}$ and $\ell\nmid j$, then we have:
	\begin{enumerate}
		\item If $j$ is odd, then
		\[
			\ou^*\left(4\ell^2n+2j\ell+\left(\tfrac{8\ell^2+1}{3}\right)\right) \equiv 0\Pmod4.
		\]
		
		\item If $j$ is even, then
		\begin{equation*}
			\ou^*\left(4\ell^2n+2j\ell+\left(\tfrac{2\ell^2+1}{3}\right)\right) \equiv 0\Pmod4.
		\end{equation*}
	\end{enumerate} 
\end{theorem}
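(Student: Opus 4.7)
The plan is to establish a mod-$4$ identity
$$\sum_{n\ge 0}\ou^*(n)q^n \equiv M(q) \pmod 4,$$
where $M(q)$ is an explicit weight-one modular form on a congruence subgroup of level dividing~$576$ with quadratic character of conductor dividing~$24$, and then to extract the stated congruences via the action of Hecke-type operators on $M$.

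\textbf{Mod-$4$ reduction.} Start from \eqref{umnstargf1} at $\z=1$, which gives
$$U^*(q) := \sum_{n\ge 0}\ou^*(n)q^n = \sum_{n\ge 0}(-q;q^2)_n^2\, q^{2n+1}.$$
Expand each factor $(1+q^{2j+1})^2 = (1+q^{4j+2}) + 2q^{2j+1}$ and keep only contributions of degree at most one in the correction $2q^{2j+1}$ to obtain
$$(-q;q^2)_n^2 \equiv (-q^2;q^4)_n\Bigl(1 + 2\sum_{j=0}^{n-1}\frac{q^{2j+1}}{1+q^{4j+2}}\Bigr) \pmod 4,$$
so that $U^*(q) \equiv A(q) + 2B(q) \pmod 4$, with $A(q) = \sum_{n\ge 0}(-q^2;q^4)_n q^{2n+1}$ and $B(q)$ needed only modulo~$2$. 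A Rogers--Fine-type summation reduces $A(q)$ to an eta-quotient, while the mod-$2$ reduction of $B(q)$, after swapping the order of summation and using $\frac{2}{1+x}\equiv \frac{2}{1-x}\pmod 4$, likewise collapses to an eta-quotient. Taken together, $M(q)$ emerges as an explicit eta-quotient whose theta expansion is supported on the residue classes predicted by \Cref{T:mod4}.

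\textbf{Hecke argument and main obstacle.} Once the theta expansion of $M(q)$ is in hand, the congruences follow by standard Hecke theory. The two constants $c_+ = (8\ell^2+1)/3$ and $c_- = (2\ell^2+1)/3$ both reduce to $3^{-1}\pmod \ell$ for $\ell\ge 5$, so the relevant Fourier coefficients of $M(q)$ lie in the arithmetic progression $2j\ell + 3^{-1}\pmod{\ell^2}$; their vanishing modulo~$4$ amounts to a character-sum identity whose value is governed by $\bigl(\tfrac{3j}{\ell}\bigr)$, explaining the Legendre-symbol condition. The dichotomy $\ell\equiv 7,13\pmod{24}$ versus the complementary residues reflects whether the quadratic character of $M(q)$ is ramified at $\ell$, and parts~(1) and~(2) correspond to the parities of $c_+$ and $c_-$ respectively. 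The principal obstacle is the mod-$4$ identity itself: the correction $B(q)$ is a priori an Appell--Lerch (mock modular) expression, and its mod-$2$ reduction must cancel against the tail of $A(q)$ to yield a genuine modular form. In practice one would first conjecture the explicit eta-quotient form of $M(q)$ by matching enough coefficients of $U^*(q)\bmod 4$, and then verify the identity rigorously using Sturm's bound on $\Gamma_0(576)$.
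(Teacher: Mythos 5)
Your opening move --- writing $(1+q^{2j+1})^2=(1+q^{4j+2})\bigl(1+\tfrac{2q^{2j+1}}{1+q^{4j+2}}\bigr)$ and expanding the product to first order in the corrections --- is sound and is essentially the paper's \Cref{L:oddpolynomial}. But you miss the one observation that makes this reduction useful: the correction $2B(q)$ is supported entirely on \emph{even} powers of $q$ (each summand is $q^{2n+1}\cdot q^{2j+1}$ times a power series in $q^2$), while every argument appearing in \Cref{T:mod4} is odd. So there is no need for $2B(q)$ to ``cancel against the tail of $A(q)$''; one simply discards it by parity and concludes $\ou^*(2N+1)\equiv(-1)^N c(N)\pmod 4$, where $\sum_{n\ge0}c(n)q^n=\nu(q)=\sum_{n\ge0}(q;q^2)_n(-q)^n$ is the third-order mock theta function and $A(q)=q\,\nu(-q^2)$.

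From there the plan breaks down at two points. First, $A(q)$ does \emph{not} reduce to an eta-quotient by any Rogers--Fine-type summation: $\nu$ is genuinely mock, and only its even part is an infinite product (Fine's (26.88): $\sum_{n\ge0} c(2n)q^{2n}=(q^4;q^4)_\infty^3/(q^2;q^2)_\infty^2$). The theorem survives only because the stated progressions land on $c(N)$ with $N$ even --- a parity verification (depending on the parity of $j$, which is why the two cases use different constants $\tfrac{8\ell^2+1}{3}$ and $\tfrac{2\ell^2+1}{3}$) that your proposal does not perform. Second, even on the even part, $\eta(4\tau)^3/\eta(2\tau)^2$ is a \emph{weakly holomorphic} eta-quotient of weight $\tfrac12$ (it has a pole at the cusp $0$), not a holomorphic weight-one form, and the mod-$4$ congruences for its coefficients in the progressions $p_1^2\cdots p_{k+1}^2 n+\cdots$ are not a ``standard Hecke theory plus Sturm bound'' exercise: they are the full content of the Chen--Chen theorem (\Cref{ChenChen}), proved via the theory of class numbers, and no single Sturm-bound computation can certify an infinite family of congruences ranging over all primes $\ell$ and all residues $j$. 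The paper's proof is exactly your first step, plus the parity bookkeeping, plus Fine's identity identifying $c(2n)$ with $\Ec\Oc(2n)$, plus a citation of Chen--Chen; your proposal replaces that last, hardest ingredient with an assertion.
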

 
The paper is organized as follows: In Section 2, we gather some necessary background on asymptotic methods and indefinite theta functions. In Section 3 we prove Theorems \ref{T:umngf} and \ref{T:umnstargf}. Sections 4 and 5 contain proofs of Theorems \ref{T:theo3} and \ref{T:theo2}. In Section 6, we show Theorem \ref{T:mod4congruences}, which contains the congruences in Theorem \ref{T:mod4} as a special case. We close in Section 7 with some open problems.

\section*{Acknowledgements}

The authors thank Caner Nazaroglu for help with numerical calculations. The first author has received funding from the European Research Council (ERC) under the European Union's Horizon 2020 research and innovation programme (grant agreement No. 101001179).

\section{Preliminaries}

\subsection{A Tauberian Theorem}

Recall the following\footnote{The second condition is often dropped in \eqref{E:as1} which makes the proposition unfortunately incorrect (see \cite{BJM}).} special case $\a=0$ of Theorem 1.1 of \cite{BJM}, which follows from Ingham's Theorem \cite{I}.

\begin{proposition}\label{P:CorToIngham}
	Suppose that $B(q)=\sum_{n\ge0}b(n)q^n$ is a power series with non-negative real coefficients and radius of convergence at least one and that the $b(n)$ are weakly increasing. Assume that $\l$, $\b$, $\g\in\R$ with $\g>0$ exist such that
	\begin{equation}\label{E:as1}
		B\left(e^{-t}\right) \sim \l t^\b e^\frac\g t \quad\text{as } t \to 0^+,\qquad B\left(e^{-z}\right) \ll |z|^\b e^\frac{\g}{|z|} \quad\text{as } z \to 0,
	\end{equation}
	with $z=x+iy$ ($x,y\in\R,x>0$) in each region of the form $|y|\le\De x$ for $\De>0$. Then
	\[
		b(n) \sim \frac{\l\g^{\frac\b2+\frac14}}{2\sqrt\pi n^{\frac\b2+\frac34}}e^{2\sqrt{\g n}} \qquad\text{as } n \to \infty.
	\]
\end{proposition}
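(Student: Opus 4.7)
The plan is to deduce the asymptotic via a saddle-point analysis of Cauchy's integral formula, with the monotonicity of $(b(n))$ supplying the Tauberian input needed where the analytic control breaks down; this is exactly the $\a=0$ case of \cite[Theorem 1.1]{BJM}, building on Ingham \cite{I}. Starting from
$$
b(n) \;=\; \frac{e^{nt}}{2\pi}\int_{-\pi}^{\pi} B\!\left(e^{-t-iy}\right) e^{iny}\, dy,
$$
I would choose $t=\sqrt{\g/n}$, the saddle point of $h(z):=nz+\g/z$, so that $h(t)=2\sqrt{\g n}$ and $h''(t)=2n^{3/2}\g^{-1/2}$.

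Next, I would split the $y$-integration into a major arc $|y|\le\De t$ and a minor arc $|y|>\De t$ for some fixed $\De>0$. On the major arc one lies inside the cone $|y|\le\De x$ with $x=t$, so the asymptotic $B(e^{-z})\sim\l z^\b e^{\g/z}$ may be inserted uniformly. Expanding $h(t+iy)=h(t)-\tfrac12 h''(t)y^2+O(y^3)$, replacing $z^\b$ by $t^\b$ (with controlled error), and evaluating the resulting Gaussian integral yields
$$
\frac{\l t^\b}{2\pi}\,e^{h(t)}\sqrt{\frac{2\pi}{h''(t)}} \;=\; \frac{\l\,\g^{\frac\b2+\frac14}}{2\sqrt\pi\, n^{\frac\b2+\frac34}}\,e^{2\sqrt{\g n}},
$$
which is precisely the claimed main term.

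The main obstacle is the minor arc, where we only have the trivial bound $|B(e^{-t-iy})|\le B(e^{-t})$, which is not small enough to beat the prefactor $e^{nt}$ on its own. As the footnote stresses, dropping the cone bound invalidates the result; the monotonicity assumption is equally essential here. The standard way around this is to pass to the partial sums $A(N):=\sum_{n\le N}b(n)$: the real-axis asymptotic together with the cone bound allow a Mellin--Perron contour argument to pin down $A(N)$ up to a $(1+o(1))$ factor, the extra factor of $1/s$ in Perron's formula providing just enough decay to close the contour. The weak monotonicity of $b(n)$ then lets one recover $b(n)\sim A(n)-A(n-1)$ at the required precision. Since this is the content of Ingham's theorem, I would simply cite \cite[Theorem 1.1]{BJM} in the final write-up and specialize to $\a=0$.
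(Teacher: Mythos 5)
The paper offers no proof of this proposition at all: it is quoted as the special case $\alpha=0$ of \cite[Theorem 1.1]{BJM}, which in turn rests on Ingham's theorem \cite{I}. Since you ultimately defer to exactly that citation, your proposal lands where the paper does, and your bookkeeping for the main term (the saddle $t=\sqrt{\gamma/n}$, $h(t)=2\sqrt{\gamma n}$, $h''(t)=2n^{3/2}\gamma^{-1/2}$, and the resulting constant $\lambda\gamma^{\beta/2+1/4}/(2\sqrt{\pi}\,n^{\beta/2+3/4})$) is correct.

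However, one step in your sketch would be a genuine gap if it were offered as a proof. On the major arc you propose to ``insert the asymptotic $B(e^{-z})\sim\lambda z^{\beta}e^{\gamma/z}$ uniformly'' for $z=t+iy$ in the cone. The hypotheses \eqref{E:as1} do not supply this: they give a two-sided asymptotic only on the positive real axis, and in cones merely an \emph{upper bound} $B(e^{-z})\ll|z|^{\beta}e^{\gamma/|z|}$. No asymptotic off the real axis is available, and this is precisely why the statement is a Tauberian theorem rather than a routine saddle-point computation: the cone bound can only show that off-axis contributions are not too large, while the main term must come from the real-axis asymptotic together with positivity and monotonicity, via Ingham's theorem applied to the partial sums $\hat b(n)=\sum_{m\le n}b(m)$ (equivalently to $B(q)/(1-q)$), followed by the sandwich $A(n)-A(n-h)\le h\,b(n)\le A(n+h)-A(n)$ with $h\asymp\varepsilon\sqrt{n}$ and $\varepsilon\to0$. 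Relatedly, the ``Mellin--Perron'' route you describe for the partial sums is not how \cite{I} or \cite{BJM} proceed, and the cone hypothesis is needed already for the partial-sum asymptotic itself (this is the content of the footnote: dropping it makes the statement false, as the counterexamples in \cite{BJM} show), not only to tame a circle-method minor arc. None of this affects your final write-up, which simply cites \cite[Theorem 1.1]{BJM} with $\alpha=0$ exactly as the paper does.
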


\subsection{The Euler--Maclaurin summation formula}\label{SS:EMsf}

For simplicity we only state the versions of the Euler--Maclaurin summation formula that we use in this paper; see \cite{BJM} for a more general version for all dimensions. Let $D_\th:=\{re^{i\a}:r\ge0\text{ and }|\a|\le\th\}$. A multivariable function $f$ in $\ell$ variables is of {\it sufficient decay} in $D$ if there exist $\e_1,\dots,\e_\ell>0$ such that (we write vectors in bold letters) $f(\bm x)\ll(x_1+1)^{-1-\e_1}\cdots(x_\ell+1)^{-1-\e_\ell}$ uniformly as $|x_1|+\ldots+|x_\ell|\to\infty$ in $D$. We first require a one-dimensional version of the Euler--Maclaurin summation formula (see \cite{Za}).

\begin{proposition}\label{P:EulerMaclaurin1DShifted}
	Suppose that $0\le\th<\frac\pi2$. Let $f:\C\to\C$ be holomorphic in a domain containing $D_\th$, so that in particular $f$ is holomorphic at the origin, and assume that $f$ and all of its derivatives are of sufficient decay. Then for $a\in\R$ and $N\in\N_0$, we have, uniformly as $z\to0$ in $D_\th$,
	\[
		\sum_{m\ge0} f((m+a)z) = \frac1z\int_0^\infty f(w) dw - \sum_{n=0}^{N-1} \frac{B_{n+1}(a)f^{(n)}(0)}{(n+1)!}z^n + O_N\left(z^N\right),
	\]
	where $B_n(x)$ denotes the $n$-th Bernoulli polynomial.
\end{proposition}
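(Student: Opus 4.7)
The plan is to establish this asymptotic expansion via Mellin transform techniques, which is also the method sketched in \cite{Za}. First I would introduce
$$\widetilde f(s) := \int_0^\infty f(w) w^{s-1}\, dw,$$
which converges absolutely in some vertical strip thanks to sufficient decay of $f$ at infinity and holomorphicity at $0$. Splitting the integral at $w=1$ and using the Taylor expansion of $f$ near the origin, one sees that $\widetilde f$ continues meromorphically to all of $\mathbb{C}$ with only simple poles at $s = 0, -1, -2, \ldots$, the residue at $s = -n$ being $f^{(n)}(0)/n!$.

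Next, by Mellin inversion together with a rotation of the contour (justified by holomorphicity of $f$ throughout $D_\theta$), for $z \in D_\theta$ I can write
$$f\bigl((m+a)z\bigr) = \frac{1}{2\pi i}\int_{(c)} \widetilde f(s) \bigl((m+a)z\bigr)^{-s}\, ds.$$
After separating finitely many initial terms to ensure $m+a > 0$ for all remaining $m$, summing over $m$ and interchanging sum and integral (valid for $c > 1$) introduces the Hurwitz zeta function $\zeta(s,a) = \sum_{m \ge 0}(m+a)^{-s}$, yielding
$$\sum_{m\ge 0} f\bigl((m+a)z\bigr) = \frac{1}{2\pi i}\int_{(c)} \widetilde f(s)\, \zeta(s,a)\, z^{-s}\, ds.$$

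Then I would shift the contour leftward to $\Re(s) = -N + \tfrac12$, collecting residues along the way. The simple pole of $\zeta(s,a)$ at $s=1$ contributes $\widetilde f(1)\,z^{-1} = \tfrac{1}{z}\int_0^\infty f(w)\, dw$, which gives the main term. The simple pole of $\widetilde f$ at $s = -n$, for $0 \le n \le N-1$, contributes
$$\frac{f^{(n)}(0)}{n!}\,\zeta(-n,a)\, z^n = -\frac{B_{n+1}(a)\, f^{(n)}(0)}{(n+1)!}\, z^n,$$
via the classical identity $\zeta(-n,a) = -B_{n+1}(a)/(n+1)$; this reproduces the claimed finite sum exactly.

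The main obstacle is bounding the remaining integral on $\Re(s) = -N + \tfrac12$ by $O_N(|z|^N)$ uniformly for $z \in D_\theta$. There one has $|z^{-s}| \le |z|^{N-1/2} e^{\theta |\Im(s)|}$, and Hurwitz zeta grows only polynomially in $|\Im(s)|$ on vertical strips. Repeated integration by parts in the definition of $\widetilde f$, using the sufficient-decay hypothesis on every derivative of $f$, shows that $\widetilde f(s)$ decays faster than any polynomial in $|\Im(s)|$; crucially, this more-than-polynomial decay absorbs the factor $e^{\theta|\Im(s)|}$ precisely because $\theta < \pi/2$. A further infinitesimal shift then upgrades the bound from $|z|^{N-1/2}$ to $|z|^N$, completing the proof.
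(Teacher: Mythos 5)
Your overall strategy---Mellin transform of $f$, Hurwitz zeta function from the sum over $m$, contour shift collecting $\widetilde f(1)z^{-1}$ at $s=1$ and $\frac{f^{(n)}(0)}{n!}\zeta(-n,a)z^{n}$ at $s=-n$---is sound and reproduces every term of the expansion correctly, including the identity $\zeta(-n,a)=-\frac{B_{n+1}(a)}{n+1}$. This is essentially the route of \cite{Za}; the paper itself gives no proof of the proposition and in effect relies on the explicit-remainder form of Euler--Maclaurin from \cite{BJM}, reproduced as \eqref{E:refine} in the appendix, which is a more elementary argument that also hands one a usable error term.

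The genuine gap is in your last paragraph. Writing $s=\sigma+it$, repeated integration by parts in $\widetilde f(s)=\int_0^\infty f(w)w^{s-1}\,dw$ gives $\widetilde f(s)\ll_k(1+|t|)^{-k}$ for every fixed $k$, i.e.\ superpolynomial decay---but superpolynomial decay does \emph{not} absorb the exponential factor $e^{\theta|t|}$ arising from $|z^{-s}|=|z|^{-\sigma}e^{t\arg z}$: one has $(1+|t|)^{-k}e^{\theta|t|}\to\infty$ for every $k$. As justified, the shifted integral on $\mathrm{Re}(s)=-N+\frac12$ is not even shown to converge for non-real $z$, and the same defect already undermines the earlier interchange of $\sum_m$ with the inversion integral. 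The missing idea is genuine exponential decay of $\widetilde f$ on vertical lines, and this is precisely where the hypothesis that $f$ is holomorphic and of sufficient decay on all of $D_\theta$ (not merely on $[0,\infty)$) must enter: rotate the ray of integration in the definition of $\widetilde f$ to $e^{i\phi}[0,\infty)$ with $\phi=\pm\theta$ chosen according to the sign of $t$, obtaining $\widetilde f(\sigma+it)\ll_{k,\sigma}(1+|t|)^{-k}e^{-\theta|t|}$. Since $|\arg z|\le\theta$, the integrand $\widetilde f(s)\zeta(s,a)z^{-s}$ is then $\ll(1+|t|)^{-k+O(1)}|z|^{N-\frac12}$ on the shifted line, which is integrable and yields the claimed bound uniformly on $D_\theta$ (the upgrade to $O_N(z^N)$ follows by running the argument with $N+1$ and absorbing the $n=N$ residue). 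With that repair the proof goes through; note also that your device of ``separating finitely many initial terms'' when $m+a\le0$ places $(m+a)z$ outside $D_\theta$ where $f$ need not be defined, so the argument really requires $a>0$, as in all of the paper's applications.
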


We also require the two-dimensional case of the Euler--Maclaurin summation formula (see \cite{BJM}).

\begin{proposition}\label{P:EulerMaclaurinGeneral}
	Suppose that $0\le\th_j<\frac\pi2$ for $1\le j\le 2$, and that $f:\C^2\to\C$ is holomorphic in a domain containing $D_{\bm\th}:=D_{\th_1}\times D_{\th_2}$. If $f$ and all of its derivatives are of sufficient decay in $D_{\bm\th}$, then for $\bm a\in\R^2$ and $N\in\N_0$ we have uniformly, as $w\to0$ in $D_{\bm\th}$,
	\begin{align*}
		\sum_{\bm m\in\N_0^2} f((\bm m+\bm a)z) &= \frac{1}{z^2}\int_0^\infty \int_0^\infty f(\bm w) dw_1dw_2 - \frac1z\sum_{n_1=0}^{N-1} \frac{B_{n_1+1}(a_1)}{(n_1+1)!}z^{n_1}\int_0^\infty f^{(n_1,0)}(0,w_2) dw_2\\
		&\qquad - \frac1z\sum_{n_2=0}^{N-1} \frac{B_{n_2+1}(a_2)}{(n_2+1)!}z^{n_2}\int_0^\infty f^{(0,n_2)}(w_1,0) dw_1\\
		&\qquad + \sum_{n_1+n_2<N} \frac{B_{n_1+1}(a_1)B_{n_2+1}(a_2)f^{(n_1,n_2)}(\bm0)}{(n_1+1)!(n_2+1)!}z^{n_1+n_2} + O_N\left(z^N\right).
	\end{align*}
\end{proposition}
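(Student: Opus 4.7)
The plan is to bootstrap from the one-dimensional version, \Cref{P:EulerMaclaurin1DShifted}, by treating the double sum as an iterated sum and applying the one-variable formula twice. Concretely, I first fix $m_1$ and apply \Cref{P:EulerMaclaurin1DShifted} to the inner sum
\[
	\sum_{m_2\ge0} f\big((m_1+a_1)z,(m_2+a_2)z\big),
\]
viewed as a one-variable sum in the second slot. This produces an integral term $\frac{1}{z}\int_0^\infty f((m_1+a_1)z,w_2)dw_2$ plus a sum over $n_2<N$ of $-\frac{B_{n_2+1}(a_2)}{(n_2+1)!}z^{n_2}f^{(0,n_2)}((m_1+a_1)z,0)$, with an error $O(z^N)$ that is uniform in $m_1$.

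Next, I sum the resulting expression over $m_1\ge0$ and apply \Cref{P:EulerMaclaurin1DShifted} once more, this time in the first slot, to each of the auxiliary functions
\[
	h(x):=\int_0^\infty f(x,w_2)dw_2,\qquad h_{n_2}(x):=f^{(0,n_2)}(x,0).
\]
Applying the one-dimensional formula to $\tfrac{1}{z}\sum_{m_1}h((m_1+a_1)z)$ yields the double integral $\frac{1}{z^2}\iint f(\bm w)dw_1dw_2$ together with correction terms $-\frac{1}{z}\sum_{n_1<N}\frac{B_{n_1+1}(a_1)}{(n_1+1)!}z^{n_1}\int_0^\infty f^{(n_1,0)}(0,w_2)dw_2$, using $h^{(n_1)}(0)=\int_0^\infty f^{(n_1,0)}(0,w_2)dw_2$ (differentiation under the integral). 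Applying it to each sum $\sum_{m_1}h_{n_2}((m_1+a_1)z)$ produces the second single-integral line involving $\int_0^\infty f^{(0,n_2)}(w_1,0)dw_1$, together with the fully discrete double sum $\sum \frac{B_{n_1+1}(a_1)B_{n_2+1}(a_2)}{(n_1+1)!(n_2+1)!}z^{n_1+n_2}f^{(n_1,n_2)}(\bm0)$. Finally, all contributions with $n_1+n_2\ge N$ are absorbed into the error $O_N(z^N)$; care is needed to track that the prefactor $\frac{1}{z}$ consumes one power of $z$ from the one-dimensional remainder in each of the two applications, which accounts for the fact that the displayed double discrete sum is restricted to $n_1+n_2<N$ rather than the looser square $n_1,n_2<N$ that one obtains naively.

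The main obstacle is to verify that the auxiliary functions $h$ and $h_{n_2}$, and all their derivatives, are holomorphic on $D_{\th_1}$ and of sufficient decay, so that \Cref{P:EulerMaclaurin1DShifted} applies. Holomorphicity follows from Morera/Fubini together with the decay hypotheses on $f$, which justify differentiation under the integral sign and allow the exchange of limits in $x_1$ with integration in $w_2$. The required decay of $h^{(n_1)}(x_1)=\int_0^\infty f^{(n_1,0)}(x_1,w_2)dw_2$ and $h_{n_2}^{(n_1)}(x_1)=f^{(n_1,n_2)}(x_1,0)$ follows directly from the uniform two-variable decay of $f$ and its partial derivatives on $D_{\bm\th}$, and the uniformity of the error terms across the iterated application is what lets us combine the two one-dimensional remainders into a single $O_N(z^N)$.
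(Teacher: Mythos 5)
First, a point of comparison: the paper does not prove this proposition at all --- it is quoted from \cite{BJM}, so there is no internal proof to measure your attempt against. Your strategy of iterating the one-dimensional formula is nevertheless the standard route (essentially how the $d$-dimensional statement is established in \cite{BJM}, by induction on the dimension), and your verification that the auxiliary functions $h$ and $h_{n_2}$ inherit holomorphy and sufficient decay from $f$ is the right thing to check and is correct.

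The genuine gap is in the error bookkeeping at the first step. After applying \Cref{P:EulerMaclaurin1DShifted} to the inner sum you obtain, for each fixed $m_1$, a remainder that is $O_N(z^N)$, and you then sum this over all $m_1\ge0$. A bound that is merely \emph{uniform} in $m_1$ is not enough: there are infinitely many values of $m_1$, so $\sum_{m_1\ge0}O(z^N)$ is not $O(z^N)$ and is not even finite a priori. What saves the argument is that the one-dimensional remainder, written explicitly as in \eqref{E:refine}, decays in the parameter $x_1=(m_1+a_1)z$ because $f$ and its derivatives do; but then $\sum_{m_1\ge0}\left(1+|(m_1+a_1)z|\right)^{-1-\e}\ll|z|^{-1}$, so summing the inner remainders costs a factor $|z|^{-1}$ and yields only $O(z^{N-1})$. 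To recover $O(z^N)$ one must run the inner application to one order higher (or carry the explicit remainder $\Ec(a;z)$ and its decay in the parameter through the computation) and then check that the extra boundary terms so produced are absorbed correctly after the second, outer application. This loss of one power of $z$ per iteration is the real technical content of the two-dimensional proof; your sketch mentions the prefactor $\frac1z$ only in connection with the restriction $n_1+n_2<N$ of the purely discrete double sum, which is the easy part (those omitted terms carry no $\frac1z$ and are trivially $O(z^N)$), not where the difficulty lies. Relatedly, you invoke the one-dimensional proposition as a black box for the whole family of functions $w_2\mapsto f(x_1,w_2)$; since its implied constant depends on the function, you must open up its proof and use the explicit error term to obtain the required uniformity over that family.
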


\subsection{Indefinite theta functions}

In this subsection, we recall results from Zwegers' thesis \cite{Zw}. Fix a quadratic form $Q$ of signature $(n,1)$ with associated matrix $A$, so that $Q(\bm x) = \frac{1}{2}\bm{x}^TA\bm{x}$. Let $B(\bm{x},\bm{y}) := Q(\bm{x} + \bm{y}) - Q(\bm{x}) - Q(\bm{y})$ denote the corresponding bilinear form. The set of vectors $\bm c\in\R^\ell$ with $Q(\bm c)<0$ splits into two connected components. Two vectors $\bm{c_1}$ and $\bm{c_2}$ lie in the same component if and only if $B(\bm{c_1},\bm{c_2})<0$. We fix one of the components and denote it by $C_Q$. Picking any vector $\bm{c_0}\in C_Q$, we have
\[
	C_Q = \left\{\bm c\in\R^\ell : Q(\bm c)<0,\ B(\bm c,\bm{c_0})<0\right\}.
\]
The cusps are elements from
\[
	S_Q := \left\{\bm c\in\Z^\ell : \gcd(c_1,c_2,\dots,c_\ell)=1,\ Q(\bm c)=0,\ B(\bm c,\bm{c_0})<0\right\}.
\]
Let $\ol C_Q:=C_Q\cup S_Q$ and define for, $\bm c\in\ol C_Q$
\[
	R(\bm c) :=
	\begin{cases}
		\R^\ell & \text{if }\bm c\in C_Q,\\
		\left\{\bm a\in\R^\ell:B(\bm c,\bm a)\notin\Z\right\} & \text{if }\bm c\in S_Q.
	\end{cases}
\]
Let $\bm{c_1},\bm{c_2}\in\ol C_Q$. We define the {\it theta function with characteristic} $\bm a\in R(\bm{c_1})\cap R(\bm{c_2})$ and $\bm b\in\R^\ell$ by
\begin{equation*}
	\vth_{\bm a,\bm b}(\t) := \sum_{\bm n\in\Z^\ell+\bm a} \p(\bm n;\t)e^{2\pi iB(\bm b,\bm n)}q^{Q(\bm n)},
\end{equation*}
where
\begin{equation*}
	\p(\bm n;\t) = \p_Q^{\bm{c_1},\bm{c_2}}(\bm n;\t) := \p^{\bm{c_1}}(\bm n;\t) - \p^{\bm{c_2}}(\bm n;\t)
\end{equation*}
with ($\t=u+iv$)
\[
	\p^{\bm c}(\bm n;\t) :=
	\begin{cases}
		E\left(\frac{B(\bm c,\bm n)\sqrt v}{\sqrt{-Q(\bm c)}}\right) & \text{if }\bm c\in C_Q,\\
		\sgn(B(\bm c,\bm n)) & \text{if }\bm c\in S_Q.
	\end{cases}
\]
Here the odd function $E$ is defined as
\[
	E(w) := 2\int_0^w e^{-\pi t^2} dt
\]
with the usual convention that $\sgn(w):=\frac{w}{|w|}$ for $w\in\R\sm\{0\}$ and $\sgn(0):=0$. Note that
\begin{equation}\label{E:EB}
	E(x) = \sgn(x)\left(1-\b\left(x^2\right)\right), \text{ where } \b(x) := \int_x^\infty w^{-\frac12}e^{-\pi w} dw.
\end{equation}
This in particular yields that $E(x)\sim\sgn(x)$ as $|x|\to\infty$.

The theta function satisfies the following transformation law.

\begin{theorem}\label{T:vthab}
	If $\bm a,\bm b\in R(\bm{c_1})\cap R(\bm{c_2})$, then
	\[
		\vth_{\bm a,\bm b}\left(-\frac1\t\right) = \frac{1}{\sqrt{-\det(A)}}(-i\t)^\frac\ell2e^{2\pi iB(\bm a,\bm b)}\sum_{\bm\ell\in A^{-1}\Z^\ell/\Z^\ell} \vth_{\bm b+\bm\ell,-\bm a}(\t).
	\]
\end{theorem}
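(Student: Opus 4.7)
The plan is to apply classical Poisson summation to the shifted lattice $\Z^\ell + \bm a$; the substantive task is to compute, in closed form, the $\bm x$-Fourier transform of
\[
	f_\t(\bm x) := \p(\bm x;\t)\, e^{2\pi i B(\bm b, \bm x)}\, q^{Q(\bm x)}, \qquad \bm x \in \R^\ell.
\]
I would first treat the case $\bm{c_1}, \bm{c_2} \in C_Q$ and recover the $S_Q$ case by letting $-Q(\bm c) \to 0^+$, using $E(\cdot\sqrt v) \to \sgn(\cdot)$ as $|\cdot| \to \infty$; the hypothesis $\bm a, \bm b \in R(\bm c)$ is precisely the condition that ensures no term becomes discontinuous in this limit.

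For fixed $\bm c \in C_Q$ and the piece $f^{\bm c}_\t(\bm x) := E\bigl(\tfrac{B(\bm c, \bm x)\sqrt v}{\sqrt{-Q(\bm c)}}\bigr)\, e^{2\pi i B(\bm b, \bm x)}\, q^{Q(\bm x)}$, I would decompose $\R^\ell = \R\bm c \oplus \bm c^{\perp_B}$ and use that $Q$ is negative definite on $\R\bm c$ and positive definite on $\bm c^{\perp_B}$ by the signature hypothesis. The Fourier transform on $\bm c^{\perp_B}$ is a standard Gaussian integral. On $\R\bm c$ the essential task is the one-dimensional integral
\[
	\int_\R E(w\sqrt v)\, e^{2\pi i \t Q(\bm c) w^2 + 2\pi i \xi w}\, dw,
\]
where $\xi$ combines the Fourier-dual variable and the $\bm b$-shift. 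I would evaluate it by differentiating in $v$ (using $E'(w) = 2 e^{-\pi w^2}$) to reduce to a pure Gaussian integral, then integrating back; the outcome again contains an $E$-factor in the dual variable. Assembling the transverse Gaussian and the determinantal Jacobian $\sqrt{-\det A}$ that arises from passing between $Q$ and its dual form yields an identity of the shape
\[
	\wh{f^{\bm c}_\t}(\bm y) = \frac{(-i\t)^{\ell/2}}{\sqrt{-\det A}}\, e^{-2\pi i B(\bm a, \bm b)}\, g^{\bm c}_{-1/\t}(\bm y),
\]
where $g^{\bm c}_\sigma$ has the same shape as $f^{\bm c}_\sigma$ but with the roles of $\bm a$ and $-\bm b$ exchanged. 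Subtracting the $\bm{c_1}$ and $\bm{c_2}$ contributions gives the analogous identity for the full $f_\t$.

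Poisson summation then yields
\[
	\vth_{\bm a, \bm b}(-1/\t) = \sum_{\bm m \in \Z^\ell} e^{2\pi i \langle \bm a, \bm m\rangle}\, \wh{f_{-1/\t}}(\bm m),
\]
and substituting the Fourier formula produces a sum whose quadratic exponent involves the dual form, of matrix $A^{-1}$. To convert this back into the form $Q$, I would regroup the sum over $\bm m \in \Z^\ell$ as a double sum over cosets $\bm\ell \in A^{-1}\Z^\ell/\Z^\ell$ and $\bm n \in \Z^\ell$ via the inclusion $\Z^\ell \hookrightarrow A^{-1}\Z^\ell$; each coset then reassembles into a single $\vth_{\bm b + \bm\ell, -\bm a}(\t)$, and the Poisson phase $e^{2\pi i \langle \bm a, \bm m\rangle}$ combined with the quadratic-completion phase from the change of variables supplies exactly the prefactor $e^{2\pi i B(\bm a, \bm b)}$. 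The main obstacle is the closed-form evaluation of the one-dimensional Fourier transform of $E(\cdot\sqrt v)$ against a Gaussian --- the core analytic identity that makes the $\p$-regularised indefinite theta series modular --- together with the continuity argument needed to pass from $C_Q$ to cusps.
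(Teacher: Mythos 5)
A preliminary remark: the paper offers no proof of this statement --- it is recalled verbatim from Zwegers' thesis \cite{Zw} --- so there is no in-paper argument to compare against. Your overall strategy (Poisson summation over the shifted lattice $\Z^\ell+\bm a$, an explicit one-dimensional Fourier transform of $E$ against a Gaussian, a coset decomposition over $A^{-1}\Z^\ell/\Z^\ell$, and a limiting argument to reach the cusps in $S_Q$, with $\bm a,\bm b\in R(\bm c)$ guaranteeing the limit is harmless) is the right family of ideas and is close in spirit to Zwegers' original argument.

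There is, however, a concrete gap at the central step, where you split $\p=\p^{\bm{c_1}}-\p^{\bm{c_2}}$ and propose to Fourier-transform each piece $f^{\bm c}_\t$ separately. That piece is not integrable on $\R^\ell$: along the line $\R\bm c$ one has $Q(w\bm c)=w^2Q(\bm c)<0$, so $\left|q^{Q(w\bm c)}\right|=e^{2\pi v\left|Q(\bm c)\right|w^2}$ grows like a Gaussian while the factor $E(\cdot)$ is merely bounded. Correspondingly, the one-dimensional integral you isolate, $\int_\R E(w\sqrt v)e^{2\pi i\t Q(\bm c)w^2+2\pi i\xi w}\,dw$ with $Q(\bm c)<0$, diverges, so the object $\wh{f^{\bm c}_\t}$ on which your whole computation rests does not exist. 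The convergence of $\vth_{\bm a,\bm b}$ itself, and the applicability of Poisson summation, come entirely from cancellation between the two pieces: writing $E(x)=\sgn(x)\left(1-\b\left(x^2\right)\right)$ as in \eqref{E:EB}, the term $\left(\sgn(B(\bm{c_1},\bm n))-\sgn(B(\bm{c_2},\bm n))\right)q^{Q(\bm n)}$ vanishes unless $Q(\bm n)>0$ (this is where the hypothesis $B(\bm{c_1},\bm{c_2})<0$, i.e.\ that both vectors lie in the same component $C_Q$, enters), while each term $\sgn(\cdot)\b\left(\cdot^2\right)q^{Q(\bm n)}$ is individually integrable because $\b\left(x^2\right)\sim e^{-\pi x^2}$ decays fast enough to beat the Gaussian growth along $\R\bm c$. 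The Fourier analysis must therefore be organized around this three-term decomposition (or around Zwegers' alternative device of differentiating $\vth$ along a path in $C_Q$ joining $\bm{c_1}$ to $\bm{c_2}$, which turns the derivative into a positive-definite theta function whose modularity is classical, and then integrating back). Without this restructuring your proposal does not close; the remaining bookkeeping --- the coset sum over $A^{-1}\Z^\ell/\Z^\ell$ and the phase $e^{2\pi iB(\bm a,\bm b)}$ --- is fine as described.
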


\section{Generating functions and the proofs of Theorems \ref{T:umngf} and \ref{T:umnstargf}}

In this section we establish in Theorems \ref{T:umngf} and \ref{T:umnstargf}. We begin with \Cref{T:umngf}.

\begin{proof}[Proof of \Cref{T:umngf}]
	Equation \eqref{umngf1} is a straightforward consequence of the fact that $(\z q;q^2)_{n+1}^{-1}$ is the generating function for partitions into odd parts of size at most $2n+1$, with the exponent of $\z$ counting the number of parts. Namely, in the notation of \eqref{unimodal}, the term $q^{2n+1}$ generates the peak $\overline{c}$, the term $(\z q;q^2)_{n+1}^{-1}$ generates the odd parts $(a_1,\dots,a_r)$ to the left of the peak, and the term $( \z^{-1}q;q^2)_{n+1}^{-1}$ generates the odd parts $(b_1,\dots,b_s)$ to the right of the peak. The exponent of $\z$ is $r-s$. Equation \eqref{umngf2} is a result in Ramanujan's lost notebook \cite[Entry 6.3.4]{An-Be1}.
	
	Equation \eqref{umngf3} requires more work. Fot its proof, we require so-called Bailey pairs (for background see \cite{Mc}). A pair of sequences $(\a_n,\b_n)_{n\ge0}$ is called a {\it Bailey pair relative to $(a,q)$} if
	\begin{equation*}
		\beta_n = \sum_{k=0}^n \frac{\alpha_k}{(q;q)_{n-k}(aq;q)_{n+k}}.
	\end{equation*}
	If $(\a_n,\b_n)$ is a Bailey pair relative to $(a,q)$, then by \cite[equation (1.5)]{L}
	\begin{equation}\label{umnBaileylemma}
		\sum_{n\ge0} q^n\b_n = \frac{1}{(aq,q;q)_\infty}\sum_{n,r\ge0} (-a)^nq^{\frac{n(n+1)}{2}+(2n+1)r}\a_r.
	\end{equation}
	The following sequences form a Bailey pair relative to $(q^2,q^2)$ \cite[pp. 727--728]{Ki-Lo2}:
	\begin{equation}\label{umn}
		\a_n = \frac{(-1)^nq^{n^2+n}\left(1-q^{4n+2}\right)}{\left(1-q^2\right)\left(1-\z q^{2n+1}\right)\left(1-\z^{-1}q^{2n+1}\right)}, \quad \b_n = \frac{1}{\left(\z q,\z^{-1}q;q^2\right)_{n+1}}.
	\end{equation}
	Inserting \eqref{umn} into \eqref{umnBaileylemma} and using the fact that
	\begin{equation}\label{parfrac}
		\frac{1-q^{4r+2}}{\left(1-\z q^{2r+1}\right)\left(1-\z^{-1}q^{2r+1}\right)} = \frac{1}{1-\z q^{2r+1}} + \frac{\z^{-1}q^{2r+1}}{1-\z^{-1}q^{2r+1}},
	\end{equation}
	we compute
	\begin{align*}
		&\sum_{n\ge0} \frac{q^{2n+1}}{\left(\z q,\z^{-1}q;q^2\right)_{n+1}} = \frac{q}{\left(q^2;q^2\right)_\infty^2}\sum_{n,r\ge0} \frac{(-1)^{n+r}q^{n^2+3n+4nr+r^2+3r}\left(1-q^{4r+2}\right)}{\left(1-\z q^{2r+1}\right)\left(1-\z^{-1}q^{2r+1}\right)}\\
		&\hspace{1cm}= \frac{q}{\left(q^2;q^2\right)_\infty^2}\left(\sum_{n,r\ge0} \frac{(-1)^{n+r}q^{n^2+3n+4nr+r^2+3r}}{1-\z q^{2r+1}} + \z^{-1}\sum_{n,r\ge0} \frac{(-1)^{n+r}q^{n^2+3n+4nr+r^2+5r+1}}{1-\z^{-1}q^{2r+1}}\right)\\
		&\hspace{1cm}= \frac{q}{\left(q^2;q^2\right)_\infty^2}\left(\sum_{n,r\ge0}-\sum_{n,r<0}\right) \frac{(-1)^{n+r}q^{n^2+3n+4rn+r^2+3r}}{1-\z q^{2r+1}}.
	\end{align*}
	In the last step we let $(n,r)\mapsto(-n-1,-r-1)$ and simplify. This completes the proof.
\end{proof}

We now turn to the proof of Theorem \ref{T:umnstargf}.

\begin{proof}[Proof of Theorem \ref{T:umnstargf}]
	We use the fact that $(-\z q;q^2)_n$ is the generating function for partitions into distinct odd parts of size at most $2n-1$, with the exponent of $\z$ counting the number of parts.

	For \eqref{umnstargf2} we require two identities,
	\begin{align}
		\sum_{n\ge0} \frac{q^{2n^2+2n+1}}{\left(-\z q,-\z^{-1}q;q^2\right)_{n+1}} &= \frac{1}{\left(q^2;q^2\right)_\infty}\sum_{n\in\Z} \frac{(-1)^nq^{3n^2+3n+1}}{1+\z q^{2n+1}}\label{omega},\\
		\sum_{n\in\Z} \frac{(a,b;q)_nw^n}{(c,d;q)_n} &= \frac{\left(aw,\frac da,\frac cb,\frac{dq}{abw};q\right)_\infty}{\left(w,d,\frac qb,\frac{cd}{abw};q\right)_\infty}\sum_{n\in\Z} \frac{\left(a,\frac{abw}{d};q\right)_n\left(\frac da\right)^n}{(aw,c;q)_n}.
		\label{2psi2}
	\end{align}
	\Cref{omega} may be found in \cite[p. 397]{ABL}, while equation \eqref{2psi2} is a bilateral transformation of Bailey \cite[example 5.20 (i)]{GR}. The notation in \eqref{qPoch} is extended to all integers via
	\begin{equation*}
		(a;q)_n = \frac{(a;q)_{\infty}}{(aq^n;q)_{\infty}}.
	\end{equation*} 
	Note that we have the identity \cite[(I.2)]{GR}
	\begin{equation}\label{qPoch-n}
		(a;q)_{-n} = \frac{(-1)^nq^\frac{n(n+1)}{2}}{a^n\left(\frac qa;q\right)_n}.
	\end{equation}

	We begin by letting $(a,b,w,q)=(-\z q,-\z^{-1}q,q^2,q^2)$ in \eqref{2psi2} and then letting $c,d\to0$. Simplifying and exchanging left- and right-hand sides gives
	\begin{align*}
		&\frac{1}{\left(q^2;q^2\right)_\infty}\sum_{n\in\Z} \frac{\z^{-n}q^{n^2+2n+1}}{1+\z q^{2n+1}} = \sum_{n\in\Z} \left(-\z q,-\z^{-1}q;q^2\right)_nq^{2n+1}\\
		&\hspace{.3cm}= \sum_{n\ge0} \left(-\z q,-\z^{-1}q;q^2\right)_nq^{2n+1} + \sum_{n\le-1} \left(-\z q,-\z^{-1}q;q^2\right)_nq^{2n+1}\\
		&\hspace{.3cm}= \sum_{\substack{n\ge0\\m\in\Z}} \ou^*(m,n)\z^mq^n + \sum_{n\ge1} \left(-\z q,-\z^{-1}q;q^2\right)_{-n}q^{-2n+1}\\
		&\hspace{.3cm}= \sum_{\substack{n\ge0\\m\in\Z}} \ou^*(m,n)\z^mq^n + \sum_{n\ge1} \frac{q^{2n^2-2n+1}}{\left(-\z q,-\z^{-1}q;q^2\right)_n} = \sum_{\substack{n\ge0\\m\in\Z}} \ou^*(m,n)\z^mq^n + \sum_{n\ge0} \frac{q^{2n^2+2n+1}}{\left(-\z q,-\z^{-1}q;q^2\right)_{n+1}}\\
		&\hspace{.3cm}= \sum_{\substack{n\ge0\\m\in\Z}} \ou^*(m,n)\z^mq^n + \frac{1}{\left(q^2;q^2\right)_\infty}\sum_{n\in\Z} \frac{(-1)^nq^{3n^2+3n+1}}{1+\z q^{2n+1}}.
	\end{align*}
	Here the final equality uses \eqref{omega} and the antepenultimate equality uses \eqref{qPoch-n}. Comparing the extremes in this string of equations gives \eqref{umnstargf2}. 

	We now turn to \eqref{umnstargf3}. For this require the fact that if $(\a_n,\b_n)$ is a Bailey pair relative to $(a,q)$, then we use \cite[Corollary 1.3]{L}
	\begin{equation}\label{umnstarBaileylemma}
		\sum_{n\ge0} (aq;q)_{2n}q^n\b_n = \frac{1}{(q;q)_\infty}\sum_{n,r\ge0} (-a)^nq^{\frac{3n(n+1)}{2}+(2n+1)r}\a_r,
	\end{equation}
	along with the following Bailey pair relative to $(1,q)$ \cite[Lemma 3]{An1}:
	\[
		\a_n =
		\begin{cases}
			(-1)^n\left(w^nq^\frac{n(n-1)}{2}+w^{-n}q^\frac{n(n+1)}{2}\right) & \text{if }n\ge1,\\
			1 & \text{if }n=0,
		\end{cases}
		\qquad \b_n = \frac{\left(w;\frac qw;q\right)_n}{(q;q)_{2n}}.
	\]
	Using this Bailey pair with $(w,q)=(-\z q,q^2)$ in \eqref{umnstarBaileylemma} we compute
	\begin{align*}
		&\sum_{n\ge0} \left(-\z q,-\z^{-1}q;q^2\right)_nq^{2n+1}\\
		&\hspace{2.4cm}= \frac{q}{\left(q^2;q^2\right)_\infty}\left({\vphantom{\sum_{\substack{n\ge0\\r\ge1}}}}\right.\sum_{n,r\ge0} (-1)^n\z^rq^{3n^2+3n+4nr+r^2+2r}+\sum_{\substack{n\ge0\\r\ge1}} (-1)^n\z^{-r}q^{3n^2+3n+4nr+r^2+2r}\left.{\vphantom{\sum_{\substack{n\ge0\\r\ge1}}}}\right)\\
		&\hspace{2.4cm}= \frac{q}{\left(q^2;q^2\right)_\infty}\left(\sum_{n,r\ge0} (-1)^n\z^rq^{3n^2+3n+4nr+r^2+2r}-\sum_{n,r<0} (-1)^n\z^rq^{3n^2+3n+4nr+r^2+2r}\right),
	\end{align*} 
	where the last line follows upon replacing $(n,r)$ by $(-n-1,-r)$. This gives \eqref{umnstargf3}.

	Finally, we treat \eqref{umnstargf4}. Again we use Bailey pairs. This time we require the fact that if $(\a,\b_n)$ is a Bailey pair relative to $(a,q)$, then \cite[Theorem 10.1]{Mc}
	\begin{equation}\label{umnstarBaileylemma2}
		\sum_{n\ge0} (b,c;q)_n\left(\frac{aq}{bc}\right)^n\b_n = \frac{\left(\frac{aq}{b},\frac{aq}{c};q\right)_\infty}{\left(aq,\frac{aq}{bc};q\right)_\infty} \sum_{n\ge0} \frac{(b,c;q)_n\left(\frac{aq}{bc}\right)^n}{\left(\frac{aq}{b},\frac{aq}{c};q\right)_n}\a_n,
	\end{equation}
	along with the Bailey pair\footnote{We point out to the reader that the $A_n$ in Andrews' paper are related to the $\alpha_n$ via $\alpha_n = a^nq^{n^2}A_n$.} relative to $(q,q)$ from \cite[equation (5.11)]{An},
	\[
		\a_n = \frac{q^{2n^2+n}\left(1-q^{2n+1}\right)}{1-q}\sum_{j=-n}^n (-1)^jq^{-\frac{j(3j+1)}{2}},\qquad \b_n = 1.
	\]
	Using this Bailey pair from \eqref{umnstarBaileylemma2} with $(b,c,q)=(-\z q,-\z^{-1}q,q^2)$ and employing \eqref{parfrac}, we compute
	\begin{align*}
		&\sum_{n\ge0} \left(-\z q,-\z^{-1}q;q^2\right)_nq^{2n+1}\\
		&\hspace{.2cm}= \frac{q\left(-\z q,-\z^{-1}q;q^2\right)_\infty}{\left(q^2;q^2\right)_\infty^2}\sum_{n\ge0} \frac{q^{4n^2+4n}\left(1-q^{4n+2}\right)}{\left(1+\z q^{2n+1}\right)\left(1+\z^{-1}q^{2n+1}\right)}\sum_{j=-n}^n (-1)^jq^{-j(3j+1)}\\
		&\hspace{.2cm}= \frac{q\left(-\z q,-\z^{-1}q;q^2\right)_\infty}{\left(q^2;q^2\right)_\infty^2}\left(\sum_{n\ge0} \sum_{j=-n}^n \frac{(-1)^jq^{4n^2+4n-j(3j+1)}}{1+\z q^{2n+1}}-\z^{-1}\sum_{n\ge0} \sum_{j=-n}^n \frac{(-1)^jq^{4n^2+6n+1-j(3j+1)}}{1+\z^{-1}q^{2n+1}}\right)\\
		&\hspace{.2cm}= \frac{q\left(-\z q,-\z^{-1}q;q^2\right)_\infty}{\left(q^2;q^2\right)_\infty^2} \left(\sum_{n\ge0} \sum_{j=-n}^n \frac{(-1)^jq^{4n^2+4n-j(3j+1)}}{1+\z q^{2n+1}} - \sum_{n\le0} \sum_{j=n}^{-n} \frac{(-1)^jq^{4n^2-4n-j(3j+1)}}{1+\z q^{2n-1}}\right).
	\end{align*}
	Now letting $(n,j)=(\frac{r+s}{2},\frac{r-s}{2})$ in the first sum on the right-hand side, letting $(n,j)=(\frac{r+s+2}{2},\frac{r-s}{2})$ in the second sum, and then simplifying gives \eqref{umnstargf4}. This completes the proof of \Cref{T:umnstargf}.
\end{proof}

\section{Proof of \Cref{T:theo3}}

In this section, we prove \Cref{T:theo3}.

\begin{proof}[Proof of \Cref{T:theo3}]
	From Theorem \ref{T:umngf} we have that
	\begin{equation}\label{ou(n)partialtheta}
		\sum_{n\ge0} \ou(n)q^n = \sum_{n\ge0} \frac{q^{2n+1}}{\left(q;q^2\right)_{n+1}^2}		= \sum_{n\ge0} (-1)^{n+1}q^{n(3n+2)}\left(1+q^{2n+1}\right)+\frac{1}{\left(q;q^2\right)_\infty^2}\sum_{n\ge0} (-1)^nq^{n^2+n}.
	\end{equation}
	We apply \Cref{P:CorToIngham}. To begin, it is not hard to see that the ${\rm ou}(n)$ are monotonic, since
	\begin{equation*}
		(1-q)\sum_{n \geq 0} {\rm ou}(n)q^n = \sum_{n \geq 0} \frac{q^{2n+1}}{(q^3;q^2)_n(q;q^2)_{n+1}}, 
	\end{equation*}
	and the right-hand side has non-negative coefficients. Alternatively, note that for any $n\in\N$
	\begin{equation*}
		\left(a_1,\dots,a_r,\ol c,b_1,\dots,b_s\right) \mapsto \left(1,a_1,\dots,a_r,\ol c,b_1,\dots,b_s\right)
	\end{equation*}
	is an injective mapping from the set of odd unimodal sequences of weight $n$ to the set of odd unimodal sequences of weight $n+1$.

	Next, let $F_2(q)$ denote the right-hand side of \eqref{ou(n)partialtheta}. We analyze each of the terms separately with the goal of showing that as $z \to 0$,
	\[
		F_2\left(e^{-z}\right) \sim \frac{e^\frac{\pi^2}{6z}}4.
	\]
	The modularity of the Dedekind $\eta$-function implies that
	\begin{equation}\label{E:partas}
		\frac{1}{\left(e^{-z};e^{-z}\right)_\infty} \sim \sqrt\frac{z}{2\pi}e^\frac{\pi^2}{6z} \qquad\text{as } z \to 0.
	\end{equation}
	Thus, with $q=e^{-z}$, we have
	\begin{equation}\label{E:qe}
		\frac{1}{\left(q;q^2\right)_\infty^2} = \frac{\left(q^2;q^2\right)_{\infty}^2}{(q;q)_{\infty}^2} \sim \frac{e^\frac{\pi^2}{6z}}2.
	\end{equation}
	We apply \Cref{P:EulerMaclaurin1DShifted} to the two sums in \eqref{ou(n)partialtheta} . We start by splitting the first sum according to the parity of the summation variable in order to rewrite it as
	\[
		\sum_{n\ge0} (-1)^{n+1}q^{n(3n+2)}\left(1+q^{2n+1}\right) = q^{-\frac13}\sum_{n\ge0} \left(q^{12\left(n+\frac23\right)^2}+q^{12\left(n+\frac56\right)^2}-q^{12\left(n+\frac16\right)^2}-q^{12\left(n+\frac13\right)^2}\right).
	\]
	Now we can apply \Cref{P:EulerMaclaurin1DShifted} with $f(z):=e^{-12z^2}$ and $a\in\{\frac23,\frac56,\frac16,\frac13\}$. The main terms from \Cref{P:EulerMaclaurin1DShifted} cancel and using that $q^{-\frac13}=O(1)$ we obtain that the first sum is $O(1)$.
	
	For the second sum we write, using \Cref{P:EulerMaclaurin1DShifted}
	\begin{align*}
		\sum_{n\ge0} (-1)^ne^{-\left(n^2+n\right)z} &= \sum_{n\ge0} e^{-\left(4n^2+2n\right)z} - \sum_{n\ge0} e^{-\left((2n+1)^2+(2n+1)\right)z}\\
		&= e^\frac z4\left(\sum_{n\ge0} \left(e^{-4\left(n+\frac14\right)^2z}-e^{-4\left(n+\frac34\right)^2z}\right)\right) \sim -B_1\left(\tfrac14\right) + B_1\left(\tfrac34\right) = \tfrac12.
	\end{align*}
	Combining with \eqref{E:qe}, \Cref{P:CorToIngham} with $\l=\frac14$, $\a=\b=0$ gives the claim.
\end{proof}

\section{Proof of \Cref{T:theo2}}

In this section, we prove \Cref{T:theo2}. As in the previous section, we wish to apply \Cref{P:CorToIngham}, though in this case the details are much more involved. To begin, we record the monotonicity of the sequence $\ou^*(n)$.

\begin{lemma} \label{L:oustarmonotonic}
For $n \geq 3$ we have that ${\rm ou}^*(n) \geq {\rm ou}^*(n-1)$. 
\end{lemma}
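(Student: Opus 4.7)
The plan is to construct an explicit injection from the set of odd strongly unimodal sequences of weight $n-1$ into the set of those of weight $n$, for $n \geq 3$. The simplest candidate, prepending a $1$ on the left, fails precisely when the sequence already begins with a $1$, since strong unimodality demands strict inequalities. The main obstacle is to define a second operation in that exceptional case so that the two pieces together still yield an injection.

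Writing $\lambda = (a_1, \ldots, a_r, \ol c, b_1, \ldots, b_s)$ of weight $n - 1 \geq 2$, I would set
\[
\phi(\lambda) := \begin{cases} (1, a_1, \ldots, a_r, \ol c, b_1, \ldots, b_s) & \text{if } 1 \notin \{a_1, \ldots, a_r\},\\ (a_2, \ldots, a_r, \ol{c+2}, b_1, \ldots, b_s) & \text{if } a_1 = 1. \end{cases}
\]
The weight increases by $+1$ in each case ($+1$ from the new part in the first case, $-1 + 2 = +1$ from the deletion and the peak-shift in the second). Checking that $\phi(\lambda)$ is an odd strongly unimodal sequence is routine; the only delicate point is that $c \geq 3$ throughout the domain, which follows because the unique sequence with peak $c = 1$ is $(\ol 1)$ of weight $1$, and the hypothesis $n - 1 \geq 2$ rules it out.

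The crucial step is injectivity. The two images are disjoint: the first case always produces a sequence whose leftmost part equals $1$, whereas the second produces a sequence whose leftmost part (if any) is $\geq 3$ and whose peak is $\geq 5$. Within each case the map is trivially reversible on its image (remove the leading $1$ in the first case; subtract $2$ from the peak and prepend a $1$ in the second), so $\phi$ is an injection, yielding $\ou^*(n) \geq \ou^*(n-1)$.
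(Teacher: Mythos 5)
Your proposal is correct and coincides with the paper's own combinatorial argument: the paper uses exactly the same two-case map (prepend a $1$ if $a_1\ne 1$; otherwise delete the leading $1$ and add $2$ to the peak), differing only in that it is phrased as a map from weight $n$ to weight $n+1$ with the small case $n=3$ checked by hand. The paper additionally offers an independent $q$-series proof via the $q$-binomial theorem and a Jackson transformation, but your injection alone suffices.
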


\begin{proof}
	We give two proofs, one employing $q$-series and one using a combinatorial argument. For the $q$-series proof, first observe that
	\begin{align}\nonumber
		(1-q)\sum_{n\ge0} \ou^*(n)q^n &= (1-q)\sum_{n\ge0} \left(-q;q^2\right)_n^2q^{2n+1} = q(1-q) + (1-q)\sum_{n\ge1} \left(-q;q^2\right)_n^2q^{2n+1}\\
		\label{last}
		&= q(1-q) + q^3\left(1-q^2\right)(1+q)\sum_{n\ge0} \left(-q^3;q^2\right)_n^2q^{2n}.
	\end{align}
	We now require the $q$-binomial theorem \cite[Exercise 1.2]{GR}
	\begin{equation}\label{qbinomial}
		\sum_{m = 0}^n \frac{(q;q)_nw^nq^\frac{m(m-1)}{2}}{(q;q)_m(q;q)_{n-m}} = (-w;q)_n
	\end{equation} 
	and a transformation of Jackson \cite[Appendix (III.4)]{GR},
	\begin{equation}\label{Jackson}
		\sum_{n\ge0} \frac{(a,b;q)_nw^n}{(c,q;q)_n} = \frac{(aw;q)_\infty}{(w;q)_\infty}\sum_{n\ge0} \frac{\left(a,\frac cb;q\right)_n(-bw)^nq^\frac{n(n-1)}{2}}{(c,aw,q;q)_n}.
	\end{equation}
	Using these, we rewrite the final sum in \eqref{last} as follows:
	\begin{align}\nonumber
		&\sum_{n\ge0} \left(-q^3;q^2\right)_n^2q^{2n} = \sum_{n\ge0} \left(-q^3;q^2\right)_nq^{2n}\sum_{m=0}^n \frac{\left(q^2;q^2\right)_nq^{m^2+2m}}{\left(q^2;q^2\right)_m\left(q^2;q^2\right)_{n-m}}\\
		\nonumber
		&\hspace{.6cm}= \sum_{m\ge0} \frac{q^{m^2+2m}}{\left(q^2;q^2\right)_m}\sum_{n\ge m} \frac{\left(-q^3,q^2;q^2\right)_nq^{2n}}{\left(q^2;q^2\right)_{n-m}} = \sum_{m\ge0} q^{m^2+4m}\left(-q^3;q^2\right)_m\sum_{n\ge0} \frac{\left(-q^{2m+3},q^{2m+2};q^2\right)_nq^{2n}}{\left(q^2;q^2\right)_n}\\
		\label{last2}
		&\hspace{.6cm}= \sum_{m\ge0} \frac{q^{m^2+4m}\left(-q^3;q^2\right)_m}{\left(q^2;q^2\right)_m}\sum_{n\ge0} \frac{q^{n^2+4n+2nm}}{\left(q^2;q^2\right)_n\left(1-q^{2n+2m+2}\right)}.
	\end{align}
	Here the first equality follows from \eqref{qbinomial} and the final equality implied by \eqref{Jackson} with $(a,b,c,w,q)=(q^{2m+2},-q^{2m+3},0,q^2,q^2)$. Combining \eqref{last} and \eqref{last2} gives
	\[
		(1-q)\sum_{n\ge0} \ou^*(n)q^n = q(1-q) + q^2\left(1-q^2\right)(1+q)\sum_{n,m\ge0} \frac{q^{n^2+4n+m^2+4m+2nm}\left(-q^3;q^2\right)_m}{\left(q^2;q^2\right)_n\left(q^2;q^2\right)_m\left(1-q^{2n+2m+2}\right)}.
	\]
	It is straightforward to see that the coefficient of $q^n$ on the right-hand side is non-negative for $n \geq 3$.

	Alternatively, one may deduce the montonicity using a combinatorial argument. For $n\ge3$ we define a mapping on odd strongly unimodal sequences of weight $n$ as follows:
	\[
		\left(a_1,\dots,a_r,\ol c,b_1,\dots,b_s\right) \mapsto
		\begin{cases}
			\left(1,a_1,\dots,a_r,\ol c,b_1,\dots,b_s\right) & \text{if }a_1\ne1,\\
			\left(a_2,\dots,a_r,\ol{c+2},b_1,\dots,b_s\right) & \text{if }a_1=1.
		\end{cases}
	\]
	It is not hard to see that in either case the image is an odd strongly unimodal sequence of weight $n+1$ and that the mapping is injective. This gives the desired inequality ${\rm ou}^*(n) \geq {\rm ou}^*(n-1)$ for $n \geq 4$, and the case $n=3$ follows from the fact that ${\rm ou}^*(3) = 1$ and ${\rm ou}^*(2) = 0$.
\end{proof}

We now turn to the proof of Theorem \ref{T:theo2}.

\begin{proof}[Proof of \Cref{T:theo2}]
	\Cref{L:oustarmonotonic} implies that the ${\rm ou}^*(n)$ are monotonic. The rest of the proof is devoted to showing that the remaining conditions of \Cref{P:CorToIngham} are satisfied. By \eqref{umnstargf3}
\begin{equation*}
		\sum_{n \geq 0} {\rm ou}^*(n)q^n = \frac{q}{\left(q^2;q^2\right)_\infty}\left(\sum_{r,n\ge0}-\sum_{r,n<0}\right) (-1)^nq^{3n^2+4nr+r^2+3n+2r}.
\end{equation*}
Denoting by $F_1(q)$ the right-hand side, we aim to prove that
	\begin{equation}\label{E:as2}
		F_1\left(e^{-t}\right) \sim \frac{e^\frac{\pi^2}{12t}}2 \quad\text{as } t \to 0,\qquad F\left(e^{-z}\right) \ll e^\frac{\pi^2}{12|z|} \quad\text{as } z \to 0,
	\end{equation}
	with $z=x+iy$ ($x,y\in\R$, $x>0$, $|y|\le\De x$, $\De>0$).
	We first consider the outside factor. 
By \eqref{E:partas} with $q=e^{-z}$ we have, as $z\to0$,
	\begin{equation*}
		\frac{q}{\left(q^2;q^2\right)_\infty} \sim \sqrt\frac z\pi e^{\frac{\pi^2}{12z}}.
	\end{equation*}
	
	Next define
	\[
		G(q) := \frac12\sum_{n,r\in\Z} \left(\sgn\left(n+\tfrac12\right)+\sgn\left(r+\tfrac12\right)\right)(-1)^n q^{3n^2+4nr+r^2+3n+2r},
	\]
	so $F_1(q)=\frac{G(q)}{(q^2;q^2)_\infty}$. We realize $G$ as ``holomorphic part'' of an indefinite theta function. For this set
	\[
		g(\t) := 2q^\frac34G(q) = i\sum_{\bm n\in\Z^2+\bm a} (\sgn(B(\bm{c_1},\bm n))-\sgn(B(\bm{c_2},\bm n)))e^{2\pi iB(\bm b,\bm n)}q^{Q(\bm n)},
	\]
	where $Q(\bm n):=3n_1^2+4n_1n_2+n_2^2$, $\bm{c_1}:=(1,-2)^T$, $\bm{c_2}:=(2,-3)^T$, $\bm b:=(-\frac14,\frac12)^T$, and $\bm a:=(\frac12,0)^T$.
	
	Using \eqref{E:EB}, we may decompose
	\[
		g(\t) = \Th(\t) + \Th^-(\t),
	\]
	where
	\begin{align*}
		\Th(\t) &:= i\sum_{\bm n\in\Z^2+\bm a} \left(E\left(\tfrac{B(\bm{c_1},\bm n)}{\sqrt{-Q(\bm{c_1})}}\sqrt v\right)-E\left(\tfrac{B(\bm{c_2},\bm n)}{\sqrt{-Q(\bm{c_2})}}\sqrt v\right)\right)e^{2\pi iB(\bm b,\bm n)}q^{Q(\bm n)},\\
		\Th^-(\t) &:= \sum_{\bm n\in\Z^2+\bm a} \left(\sgn(n_1)\b\left(4n_1^2v\right)+\sgn(n_2)\b\left(\tfrac{4n_2^2v}{3}\right)\right) (-1)^{n_1-\frac12}q^{3n_1^2+4n_1n_2+n_2^2}.
	\end{align*}
	In fact the identity holds termwise and we use that
	\begin{align*}
		3n_1^2 + 4n_1n_2 + n_2^2 &= Q(\bm n),\quad (-1)^{n_1-\frac12} = -ie^{2\pi iB(\bm b,\bm n)},\quad n_1 = -\tfrac12B(\bm{c_1},\bm n),\quad n_2 = \tfrac12B(\bm{c_2},\bm n),\\
		Q(\bm{c_1}) &= -1,\qquad Q(\bm{c_2}) = -3.
	\end{align*}
	We determine the asymptotic behavior of $\Theta$ and $\Theta^-$ separately: For $\Th$ we use modularity and for $\Th^-$ the Euler--Maclaurin summation formula.
	
	We start with $\Th$. We have that (in the notation of Subsection \ref{SS:EMsf})
	\[
		\Th(\t) = i\vth_{\bm a,\bm b}(\t).
	\]
	We apply \Cref{T:vthab} to obtain
	\[
		\vth_{\bm a,\bm b}(\t) = -\frac{1}{2\t}\sum_{\bm\ell\in\left\{\bm0,\left(\frac12,0\right),\left(0,\frac12\right), \left(\frac12,\frac12\right)\right\}} \vth_{\bm b+\bm\ell,-\bm a}\left(-\frac1\t\right).
	\]
	Now write
	\begin{equation}\label{trep}
		\vth_{\bm b+\bm\ell,-\bm a}(\t) = -\sum_{\bm n\in\Z^2+\bm b+\bm\ell} \left(E\left(2n_1\sqrt v\right)+E\left(\tfrac{2n_2\sqrt v}{\sqrt3}\right)\right)e^{2\pi iB(-\bm a,\bm n)}q^{Q(\bm n)}.
	\end{equation}
	Using that $E(x)\sim\sgn(x)$ as $|x|\to\infty$ the terms in \eqref{trep} exponentially decay.
	
	We next investigate $\Th^-$ and write it as
	\begin{align*}
		\Th^-(\t) &= \sum_{\bm n\in\Z^2+\bm a} \left(\sgn(n_1)\b\left(4n_1^2v\right)+\sgn(n_2)\b\tleg{4n_2^2v}{3}\right) (-1)^{n_1-\tfrac12}q^{3n_1^2+4n_1n_2+n_2^2}\\
		&= \sum_{\bm n\in\Z^2} \left(\sgn\left(n_1+\tfrac12\right)\b\left(4\left(n_1+\tfrac12\right)^2v\right) + \sgn(n_2)\b\tleg{4n_2^2v}{3}\right)(-1)^{n_1}\\
		&\hspace{9.5cm}\times q^{3\left(n_1+\tfrac12\right)^2+4\left(n_1+\tfrac12\right)n_2+n_2^2}\\
		&= \sum_{\substack{n_1\ge0\\n_2\ge1}} (-1)^{n_1} \left(\b\left(4\left(n_1+\tfrac12\right)^2v\right)+\b\left(\tfrac43n_2^2v\right)\right) q^{3\left(n_1+\tfrac12\right)^2+4\left(n_1+\tfrac12\right)n_2+n_2^2}\\
		&+ \sum_{\substack{n_1\ge0\\n_2\ge1}} (-1)^{-n_1-1} \left(-\b\left(4\left(-n_1-1+\tfrac12\right)^2v\right)+\b\left(\tfrac43n_2^2v\right)\right) q^{3\left(-n_1-1+\tfrac12\right)^2+4\left(-n_1-1+\tfrac12\right)n_2+n_2^2}\\
		&+ \sum_{\substack{n_1\ge0\\n_2\ge1}} (-1)^{n_1} \left(\b\left(4\left(n_1+\tfrac12\right)^2v\right)-\b\left(\tfrac43(-n_2)^2v\right)\right) q^{3\left(n_1+\tfrac12\right)^2+4\left(n_1+\tfrac12\right)(-n_2)+(-n_2)^2}\\
		&+ \sum_{n_1\ge0} (-1)^{-n_1-1} \left(-\b\left(4\left(-n_1-1+\tfrac12\right)^2v\right)-\b\left(\tfrac43(-n_2)^2v\right)\right)\\
		&\hspace{7cm}\times q^{3\left(-n_1-1+\tfrac12\right)^2+4\left(-n_1-1+\tfrac12\right)(-n_2)+(-n_2)^2}\\
		&= 2\sum_{\substack{n_1\ge0\\n_2\ge1}} (-1)^{n_1} \left(\b\left(4\left(n_1+\tfrac12\right)^2v\right)+\b\tleg{4n_2^2v}{3}\right) q^{3\left(n_1+\tfrac12\right)^2+4\left(n_1+\tfrac12\right)n_2+n_2^2}\\
		&+ 2\sum_{\substack{n_1\ge0\\n_2\ge1}} (-1)^{n_1} \left(\b\left(4\left(n_1+\tfrac12\right)^2v\right)-\b\tleg{4n_2^2v}{3}\right) q^{3\left(n_1+\tfrac12\right)^2-4\left(n_1+\tfrac12\right)n_2+n_2^2}\\
		&= 2\sum_\pm \sum_{\d\in\{0,1\}} (-1)^\d\sum_{n_1,n_2\ge0} \left(\b\left(16\left(n_1+\tfrac\d2+\tfrac14\right)^2v\right) \pm \b\left(\tfrac43(n_2+1)^2v\right)\right)\\
		&\hspace{7cm}\times q^{12\left(n_1+\tfrac\d2+\tfrac14\right)^2 \pm 8\left(n_1+\tfrac\d2+\tfrac14\right)(n_2+1)+(n_2+1)^2}.
	\end{align*}
	We now first show the first asymptotic in \eqref{E:as2}. For this, let $\t=\frac{it}{2\pi}$. Then
	\[
		\Th^-\left(\tfrac{it}{2\pi}\right) = 2\sum_\pm \sum_{\d\in\{0,1\}} (-1)^\d\sum_{n_1,n_2\ge0} f_\pm \left(\left(n_1+\tfrac\d2+\tfrac14,n_2+1\right)\sqrt t\right),
	\]
	where
	\[
		f_\pm(x_1,x_2) := \left(\b\left(\tfrac{8x_1^2}{\pi}\right)\pm\b\left(\tfrac{2x_2^2}{3\pi}\right)\right)e^{-12x_1^2\mp8x_1x_2-x_2^2}.
	\]

	We now use \Cref{P:EulerMaclaurinGeneral}. The term with the double integral term vanishes (the two $\d$-terms cancel). The second term contributes 
	\[
		-\frac{2}{\sqrt t}\sum_\pm \sum_{\d\in\{0,1\}} (-1)^\d\sum_{n_1=0}^{N-1} \frac{B_{n_1+1}\left(\frac\d2+\frac14\right)t^\frac{n_1}{2}}{(n_1+1)!}\int_0^\infty f_\pm^{(n_1,0)}(0,w_2) dw_2.
	\]
	By combining the term for $\d=0$ and $\d=1$, using properties of Bernoulli polynomials it is not hard to see that only $n_1$ even survive. The terms from $n_1\ge1$ yield a contribution overall that is $O(t)$. Using that $\b(0)=1$, the term $n_1=0$ gives
	\[
		-\frac{4}{\sqrt t}\sum_\pm B_1\tleg14\int_0^\infty \left(1\pm\b\left(\tfrac{2x_2^2}{3\pi}\right)\right)e^{-x_2^2} dx_2 = \frac{2}{\sqrt t}\int_0^\infty e^{-w^2_2} dw_2 = \sqrt{\tfrac\pi t}.
	\]
	
	For the third term, we have
	\[
		-\frac{2}{\sqrt t}\sum_\pm \sum_{\d\in\{0,1\}} (-1)^\d\sum_{n_2=0}^{N-1} \frac{B_{n_2+1}(1)}{(n_2+1)!}t^\frac{n_2}{2}\int_0^\infty f_\pm^{(0,n_2)}(w_1,0) dw_1 = 0,
	\]
	because the $\d$-terms cancel. The final term in \Cref{P:EulerMaclaurinGeneral} is in $O(t)$. This gives that the first asymptotic in \eqref{E:as2} holds.
	
	We next need to show that the second asymptotic in \eqref{E:as2} holds. For this, we need to prove that
	\begin{multline}\label{E:second2}
		\sum_\pm \sum_{\d\in\{0,1\}} (-1)^\d\sum_{n_1,n_2\ge0} \left(\b\left(\tfrac8\pi\left(n_1+\tfrac\d2+\tfrac14\right)^2x\right) \pm \b\left(\tfrac{2}{3\pi}(n_2+1)^2x\right)\right)\\
		\times e^{-\left(12\left(n_1+\frac\d2+\frac14\right)^2 \pm 8\left(n_1+\frac\d2+\frac14\right)(n_2+1)+(n_2+1)^2\right)z} \ll |z|^\frac12.
	\end{multline}
	The proof follows by a lengthy calculation from the following refinement of \Cref{P:EulerMaclaurinGeneral} in the one-dimensional case, namely (see (5.8) of \cite{BJM})
	\begin{equation}\label{E:refine}
		\sum_{n\ge0} f((n+a)z) = \frac1z\int_0^\infty f(w) dw - \sum_{n=0}^{N-1} \frac{B_{n+1}(a)f^{(n)}(0)}{(n+1)!}z^n + \Ec(a;z),
	\end{equation}
	where ($C_R(0)$ is the circle around $0$ with radius $R$, $\wt B_N(x):=B_N(x-\flo x)$)
	\begin{multline*}
		\Ec(a;z) := -\sum_{k\ge N} \frac{f^{(k)}(0)a^{k+1}}{(k+1)!}z^k - \frac{z^N}{2\pi i}\sum_{n=0}^{N-1} \frac{B_{n+1}(0)a^{N-n}}{(n+1)!}\int_{C_R(0)} \frac{f^{(n)}(w)}{w^{N-n}(w-az)} dw\\
		-\frac{(-1)^Nz^{N-1}}{N!}\int_{az}^{z\infty} f^{(N)}(w)\wt B_N\left(\frac wz-a\right) dw.
	\end{multline*}
	For the reader's convenience we defer the full proof of \eqref{E:second2} to Appendix A.

	Combining and using \Cref{P:CorToIngham} with $\g=\frac{\pi^2}{12}$, $\b=0$, and $\l=\frac12$ gives the claim.
\end{proof}

\section{Congruences for $\ou^*(n)$ modulo $4$ and the proof of \Cref{T:mod4}}

In this section we prove Theorem \ref{T:mod4congruences} below.   Note that this reduces to Theorem \ref{T:mod4} for $k=0$.   First, we determine the parity of ${\rm ou}^*(n)$.

\begin{proposition}\label{P:oun}
	For $n\in\N$, we have that $\ou^*(n)$ is odd if and only if $6n-2$ is a square.
\end{proposition}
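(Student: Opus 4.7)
The plan is to reduce $\sum_{n\ge 0}\ou^*(n)q^n$ modulo $2$ to a Jacobi theta series using the Hecke-type expression \eqref{umnstargf3} at $\z=1$, where the sign $(-1)^n$ drops out and the two sectors of the double sum combine. The opening step will be to complete the square, writing $3n^2+3n+4nr+r^2+2r=(r+2n+1)^2-n(n+1)-1$, and then substituting $u=r+2n+1$, $v=n$. The region $n,r\ge 0$ corresponds to $v\ge 0$, $u\ge 2v+1$, while the involution $(n,r)\mapsto(-n-1,-r-1)$ maps the region $n,r<0$ bijectively onto $v\ge 0$, $u\ge 2v+2$, preserving the exponent. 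The second region differs from the first only on the ``boundary'' $u=2v+1$; summing the two thus gives twice the overlap plus the boundary contribution, and only the boundary survives modulo $2$, leaving
\[
\sum_{n\ge 0}\ou^*(n)q^n\equiv\frac{q}{(q^2;q^2)_\infty}\sum_{v\ge 0}q^{3v(v+1)}\pmod 2.
\]

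Next I would apply Gauss's triangular-number identity $\sum_{v\ge 0}q^{v(v+1)}=(q^4;q^4)_\infty/(q^2;q^4)_\infty$ with $q\mapsto q^3$, giving $\sum_{v\ge 0}q^{3v(v+1)}=(q^{12};q^{12})_\infty/(q^6;q^{12})_\infty$. The remaining product ratio simplifies modulo $2$ via
\[
(q^2;q^2)_\infty(q^6;q^{12})_\infty(-q^2,-q^{10};q^{12})_\infty\equiv(q^2;q^2)_\infty(q^2;q^4)_\infty=\frac{(q^2;q^2)_\infty^2}{(q^4;q^4)_\infty}\equiv 1\pmod 2,
\]
which uses $(-q^a;q^b)_\infty\equiv(q^a;q^b)_\infty\pmod 2$, the factoring $(q^2,q^6,q^{10};q^{12})_\infty=(q^2;q^4)_\infty$, and the elementary $(1-x)^2\equiv 1-x^2\pmod 2$. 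Combining with Jacobi's triple product $(q^{12};q^{12})_\infty(-q^2,-q^{10};q^{12})_\infty=\sum_{m\in\Z}q^{6m^2+4m}$, I obtain
\[
\sum_{n\ge 0}\ou^*(n)q^n\equiv\sum_{m\in\Z}q^{6m^2+4m+1}\pmod 2.
\]

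To conclude, the map $m\mapsto 6m^2+4m+1$ is injective on $\Z$, since $6m^2+4m=6m'^2+4m'$ forces $(m-m')(6(m+m')+4)=0$, which over $\Z$ yields $m=m'$. Hence the coefficient of $q^n$ on the right is $1$ precisely when $n=6m^2+4m+1$ for some $m\in\Z$, equivalently $6n-2=(6m+2)^2$. Any integer $k$ with $k^2=6n-2$ satisfies $k^2\equiv 4\pmod 6$, hence $k\equiv\pm 2\pmod 6$, so $k=\pm(6m+2)$; therefore ``$6n-2$ is a perfect square'' is equivalent to ``$n=6m^2+4m+1$ for some $m\in\Z$'', which completes the argument.

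The hardest step will be the opening reduction of the Hecke-type double sum: one must carefully track both sectors, execute the completion of squares, and verify that the involution $(n,r)\mapsto(-n-1,-r-1)$ indeed pairs up all off-diagonal terms so that exactly the half-theta series $\sum_{v\ge 0}q^{3v(v+1)}$ remains modulo $2$. The subsequent product manipulations and the triple-product appeal are then essentially mechanical.
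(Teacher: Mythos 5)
Your argument is correct, but it takes a genuinely different route from the paper. The paper works from the Eulerian form \eqref{umnstargf1}: it first shows $(-q;q^2)_n^2\equiv(q^2;q^4)_n\pmod{2}$ (essentially \Cref{L:oddpolynomial} read modulo $2$), so that $\sum_{n\ge0}\ou^*(n)q^n\equiv\sum_{n\ge0}(q^2;q^4)_nq^{2n+1}\pmod{2}$, and then invokes the partial theta identity $\sum_{n\ge0}(q;q^2)_nq^n=\sum_{n\ge0}(-1)^nq^{3n^2+2n}(1+q^{2n+1})$ from Ramanujan's lost notebook, with $q\mapsto q^2$, to land directly on $\sum_{n\in\Z}q^{6n^2+4n+1}\pmod{2}$. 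You instead start from the Hecke-type form \eqref{umnstargf3} at $\z=1$, pair the two sectors of the indefinite sum via $(n,r)\mapsto(-n-1,-r-1)$ so that everything cancels in pairs modulo $2$ except the boundary $r=0$ (your $u=2v+1$), and finish with Gauss's identity, the Jacobi triple product, and the Frobenius congruence $(q^2;q^2)_\infty^2\equiv(q^4;q^4)_\infty\pmod{2}$. I checked the details: the completion of the square, the sign bookkeeping (in the second sector $(-1)^n=(-1)^{V+1}$, which cancels the outer minus sign and makes the two sectors add), the identification $(q^2,q^6,q^{10};q^{12})_\infty=(q^2;q^4)_\infty$, and the final equivalence between $n=6m^2+4m+1$ and $6n-2$ being a square all hold. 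The trade-off: the paper's proof is much shorter but uses the lost-notebook identity as a black box, while yours needs only classical theta identities once \eqref{umnstargf3} is granted --- at the cost that \eqref{umnstargf3} is the deeper of the two generating-function forms (proved via Bailey pairs), whereas \eqref{umnstargf1} is essentially the combinatorial definition.
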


\begin{proof}
	We require a classical $q$-series identity from Ramanujan's lost notebook \cite[Entry 9.5.2]{An-Be},
	\[
		\sum_{n\ge0} \left(q;q^2\right)_nq^n = \sum_{n\ge0} (-1)^nq^{3n^2+2n}\left(1+q^{2n+1}\right).
	\]
	Using this along with \eqref{umnstargf1}, we have
	\begin{align*}
		\sum_{n\ge0} \ou^*(n)q^n &= \sum_{n\ge0} \left(-q;q^2\right)_n^2q^{2n+1} \equiv \sum_{n\ge0} \left(q^2;q^4\right)_nq^{2n+1} = \sum_{n\ge0} (-1)^nq^{6n^2+4n+1}\left(1+q^{4n+2}\right)\\
		&\equiv \sum_{n\in\Z} q^{6n^2+4n+1}\Pmod2.
	\end{align*}
	Now in the extremes of this string of equations we replace $q$ by $q^6$ and multiply by $q^{-2}$ to obtain
	\[
		\sum_{n\ge0} \ou^*(n)q^{6n-2} \equiv \sum_{n\in\Z} q^{(6n+2)^2}\Pmod2, 
	\]
	and the result follows.
\end{proof}

We now state the main result of this section.

\begin{theorem}\label{T:mod4congruences}
	Let $k\in\N$ and for $r$ with $1\le r \le k+1$, let $p_r\ge5$ be prime. For any $j\not\equiv0\Pmod{p_{k+1}}$, if $p_{k+1}\not\equiv7,13\Pmod{24}$ or $p_{k+1}\equiv7,13\Pmod{24}$ and $(\frac{3j}{p_{k+1}})=-1$, then we have:
	\begin{enumerate}
		\item If $j$ is odd, then we have
		\begin{equation*}
			{\rm ou}^*\left(4 p_1^2 \cdots p_{k+1}^2 n + 2 p_1^2 \cdots p_k^2 p_{k+1} j + \tfrac{8p_1^2 \cdots p_{k+1}^2 + 1}{3}\right) \equiv 0 \pmod{4}.
		\end{equation*}
		
		\item If $j$ is even, then we have
		\begin{equation*}
			{\rm ou}^*\left(4 p_1^2 \cdots p_{k+1}^2 n + 2 p_1^2 \cdots p_k^2 p_{k+1} j + \tfrac{2p_1^2 \cdots p_{k+1}^2 + 1}{3}\right) \equiv 0 \pmod{4}.
		\end{equation*}
	\end{enumerate}
\end{theorem}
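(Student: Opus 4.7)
My plan is to combine a mod-2 argument based on Proposition \ref{P:oun} with a finer mod-4 expansion of the generating function \eqref{umnstargf1}. Fix $\ell := p_{k+1}$ and $P := p_1^2 \cdots p_{k+1}^2$.

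First, I would verify that $\ou^*(N) \equiv 0 \pmod 2$ using Proposition \ref{P:oun}. For case (1), $N = 4Pn + 2(P/\ell)j + (8P+1)/3$, so
\[
6N - 2 \;=\; 24 Pn + 12(P/\ell)j + 16 P.
\]
The first and third summands have $v_\ell \geq 2$, while the middle summand has $v_\ell = 1$ (using $\ell \geq 5$ and $\ell \nmid j$). Hence $v_\ell(6N-2) = 1$, so $6N-2$ cannot be a perfect square, and Proposition \ref{P:oun} gives $\ou^*(N) \equiv 0 \pmod 2$. Case (2) is identical with $16P$ replaced by $4P$.

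Next, I would upgrade to mod 4 by using $(1 + q^{2k+1})^2 = (1 + q^{4k+2}) + 2 q^{2k+1}$ and expanding $(-q;q^2)_n^2$ modulo 4, which gives
\[
\sum_{n \geq 0} \ou^*(n) q^n \;\equiv\; F(q) + 2 G(q) \pmod 4,
\]
where $F(q) := \sum_n (-q^2;q^4)_n q^{2n+1}$ and $G(q) := \sum_n q^{2n+1} \sum_{j=0}^{n-1} q^{2j+1} (-q^2;q^4)_n /(1 + q^{4j+2})$. Mod 2, the manipulation in the proof of Proposition \ref{P:oun} (applying Ramanujan's identity $\sum_n (q;q^2)_n q^n = \sum_n (-1)^n q^{3n^2+2n}(1 + q^{2n+1})$ with $q \mapsto q^2$ and re-indexing) shows $F(q) \equiv \sum_{n \in \Z} q^{6n^2 + 4n + 1}$, so $[q^N] F(q) \equiv 0 \pmod 2$ follows from the valuation argument in the first step.

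It remains to show $[q^N] G(q) \equiv 0 \pmod 2$. I would rewrite $G(q)$ modulo 2 as a binary (indefinite) theta series $\sum_{\bm v} \epsilon(\bm v) q^{Q(\bm v)}$ using Bailey-pair manipulations analogous to those in Section 3, combined with the mod-2 geometric expansion $\frac{1}{1 + q^{4j+2}} \equiv \sum_{k \geq 0} q^{(4j+2)k}$. Reducing the representation equation $Q(\bm v) = N$ modulo $\ell$ and analyzing the resulting quadratic condition should match the stated Legendre hypothesis: the discriminant of $Q$ is related to $-3$, so $\bigl(\tfrac{-3}{\ell}\bigr)$ is determined by $\ell \pmod{12}$, explaining the $\ell \equiv 7, 13 \pmod{24}$ dichotomy, and in those residue classes the extra condition $\bigl(\tfrac{3j}{\ell}\bigr) = -1$ is precisely what rules out a solution. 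For $k \geq 1$, the general case reduces to the $k=0$ case for the prime $\ell = p_{k+1}$ by setting $j' := (p_1^2 \cdots p_k^2)j$ and absorbing the remaining factors into $n$ (an integer shift, since $p_r^2 \equiv 1 \pmod 3$ keeps the constant of the required form); then $\ell \nmid j'$ and $\bigl(\tfrac{3 j'}{\ell}\bigr) = \bigl(\tfrac{3j}{\ell}\bigr)$ (assuming the primes are distinct from $\ell$), so the base case applies.

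The main obstacle will be the explicit identification of $G(q) \pmod 2$ as a concrete theta series in Step 3. The $q$-series manipulations themselves are routine, but pinning down the precise quadratic form $Q$ and then matching the arithmetic of its representations to the stated Legendre condition is the substantive step; I expect it to parallel, with suitable modifications for odd parts, the Chen--Garvan analysis for the ordinary strongly unimodal sequences $u^*(n)$ cited in the introduction.
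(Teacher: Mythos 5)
Your opening mod-$2$ valuation argument via \Cref{P:oun} is correct, and your expansion $\sum_{n\ge0}\ou^*(n)q^n\equiv F(q)+2G(q)\pmod4$ is essentially the paper's \Cref{L:oddpolynomial}. But you have mislocated the difficulty, and the plan as written cannot yield the theorem. Since $2G(q)$ is supported on \emph{even} powers of $q$ while every argument $N=4Pn+2(P/\ell)j+\frac{8P+1}{3}$ (or with $2P$ in place of $8P$) is \emph{odd}, the term $G$ contributes nothing at all to $\ou^*(N)$: the entire content of the theorem is the single congruence $[q^N]F(q)\equiv0\pmod4$. Your proposal only controls $F$ modulo $2$, by reducing it to the theta series $\sum_{n\in\Z}q^{6n^2+4n+1}$ via Ramanujan's identity; combined with anything whatsoever about $G$, this gives at best $\ou^*(N)\equiv0\pmod2$, which you already had from \Cref{P:oun}. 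The step you flag as the ``main obstacle'' --- identifying $G\pmod2$ as an indefinite theta series --- is vacuous for odd $N$ and, even if carried out, would leave the gap between mod $2$ and mod $4$ untouched.

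The missing idea is the mod-$4$ treatment of $F$ itself. The paper observes that $F(q)=\sum_{n\ge0}(-1)^nc(n)q^{2n+1}$, where $c(n)$ are the coefficients of the third order mock theta function $\nu(q)=\sum_{n\ge0}(q;q^2)_n(-q)^n$, so that $\ou^*(2n+1)\equiv(-1)^nc(n)\pmod4$; then Fine's identity $\sum_{n\ge0}c(2n)q^{2n}=(q^4;q^4)_\infty^3/(q^2;q^2)_\infty^2$ identifies the even-indexed $c(n)$ with $\Ec\Oc(n)$, and the theorem is imported wholesale from the Chen--Chen congruences (\Cref{ChenChen}), which rest on class number computations. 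Nothing in your outline supplies a substitute for this deep input. Relatedly, your heuristic that the $\ell\equiv7,13\pmod{24}$ dichotomy is explained by $\bigl(\tfrac{-3}{\ell}\bigr)$ alone cannot be right: the residues $1$ and $19\pmod{24}$ also satisfy $\ell\equiv1\pmod3$, so the condition is genuinely a mod-$24$ (not mod-$12$) phenomenon, reflecting the structure of the Chen--Chen result rather than a discriminant computation for a binary form. Finally, a small caution on your reduction to $k=0$: the theorem imposes no distinctness on the $p_r$, so the claim $\bigl(\tfrac{3j'}{\ell}\bigr)=\bigl(\tfrac{3j}{\ell}\bigr)$ with $j'=p_1^2\cdots p_k^2 j$ fails when some $p_i=\ell$; the paper avoids this by invoking the full multi-prime statement of \Cref{ChenChen} directly.
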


To make the proof smoother, we first prove a simple lemma.

\begin{lemma}\label{L:oddpolynomial}
	For $n\in\N$ we have that modulo $4$,
	\begin{equation*}
		\left(-q;q^2\right)_n^2 - \left(-q^2;q^4\right)_n
	\end{equation*}
	is an odd polynomial.
\end{lemma}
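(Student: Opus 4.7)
The plan is to expand $(-q;q^2)_n^2$ factor by factor and track what survives modulo $4$. The key identity is the elementary observation
\[
\left(1+q^{2j+1}\right)^2 = 1 + 2q^{2j+1} + q^{4j+2} = \left(1+q^{4j+2}\right) + 2q^{2j+1}.
\]
Thus
\[
\left(-q;q^2\right)_n^2 = \prod_{j=0}^{n-1}\left(\left(1+q^{4j+2}\right) + 2q^{2j+1}\right).
\]

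Next I would expand this product by choosing, in each factor, either the summand $(1+q^{4j+2})$ or the summand $2q^{2j+1}$. Any choice that picks the second summand at least twice contributes a multiple of $4$, hence vanishes modulo $4$. Only two types of terms survive modulo $4$: the term where every factor contributes $(1+q^{4j+2})$, which is exactly $(-q^2;q^4)_n$; and the terms where exactly one factor, say the $j$-th one, contributes $2q^{2j+1}$. Collecting these gives
\[
\left(-q;q^2\right)_n^2 \equiv \left(-q^2;q^4\right)_n + 2\sum_{j=0}^{n-1} q^{2j+1}\prod_{\substack{0\le i\le n-1\\ i\ne j}}\left(1+q^{4i+2}\right) \pmod 4.
\]

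Subtracting $(-q^2;q^4)_n$ isolates the desired difference. The final observation is that each product $\prod_{i\ne j}(1+q^{4i+2})$ is a polynomial in $q^2$, i.e., contains only even powers of $q$. Multiplying by $q^{2j+1}$ shifts every monomial to an odd power. Hence
\[
\left(-q;q^2\right)_n^2 - \left(-q^2;q^4\right)_n \equiv 2\sum_{j=0}^{n-1} q^{2j+1}\prod_{\substack{0\le i\le n-1\\ i\ne j}}\left(1+q^{4i+2}\right) \pmod 4
\]
is supported only on odd powers of $q$, which is precisely the claim. There is no real obstacle here; the whole argument reduces to the binomial identity for $(1+q^{2j+1})^2$ and a parity bookkeeping of exponents.
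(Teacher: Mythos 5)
Your proof is correct, and it rests on the same key identity as the paper's, namely $\left(1+q^{2j+1}\right)^2=\left(1+q^{4j+2}\right)+2q^{2j+1}$; the only difference is organizational, in that the paper runs an induction on $n$ (peeling off one factor at a time and using $\left(-q;q^2\right)_n^2\equiv\left(-q^2;q^4\right)_n\Pmod2$ to handle the cross term), whereas you expand the whole product at once and discard every term carrying two or more factors of $2$. Your unrolled version has the minor bonus of an explicit closed form for the difference modulo $4$, but the substance of the argument is the same.
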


\begin{proof}
	We prove the claim by induction. The case $n=1$ is clear. 

	Now assume that the claim holds for $n\in\N$. Then
	\[
		\left(-q;q^2\right)^2_{n+1}
		-\left(-q^2;q^4\right)_{n+1}
		=
		\left(1+q^{4n+2}\right)
		\left(
		\left(-q;q^2\right)^2_n
		-\left(-q^2;q^4\right)_n
		\right)
		+
		2q^{2n+1}\left(-q;q^2\right)^2_{n}.
	\]
	By the induction assumption the first term is an odd polynomial $\pmod{4}$. Thus we are left to show that $(-q;q^2)^2_n$ is an even polynomial $\pmod{2}$. Now we have the even polynomial 
	\[
		\left(-q;q^2\right)^2_n \equiv \left(-q^2;q^4\right)_n\Pmod2. \qedhere
	\]
\end{proof}

Next we use a result of Chen and Chen \cite{Ch-Ch}, who employed the theory of class numbers to prove congruences modulo $4$ for $\Ec\Oc(n)$, defined via the infinite product
\begin{equation} \label{EOproduct}
	\sum_{n\ge0} \Ec\Oc(n)q^n := \frac{\left(q^4;q^4\right)_\infty^3}{\left(q^2;q^2\right)_\infty^2}.
\end{equation}

\begin{theorem}[\cite{Ch-Ch}]\label{ChenChen}
	Let $k\in\N_0$. For $i$ with $1\le i \le k+1$, let $p_i \ge5$ be prime. For $j\not\equiv0\Pmod{p_{k+1}}$, if $p_{k+1}\not\equiv7,13\Pmod{24}$ or $p_{k+1}\equiv7,13\Pmod{24}$ and $(\frac{3j}{p_{k+1}})=-1$, then for $n\in\N_0$
	\begin{equation}\label{eq:EOmod4}
		\Ec\Oc\left(p_1^2\cdots p_{k+1}^2n+p_1^2\cdots p_k^2p_{k+1}j+\tfrac{p_1^2\cdots p_{k+1}^2-1}{3}\right) \equiv 0\Pmod4.
	\end{equation}
\end{theorem}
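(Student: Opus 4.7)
The plan is to identify the generating function
\[
\sum_{n\ge 0}\Ec\Oc(n)q^n=\frac{(q^4;q^4)_\infty^3}{(q^2;q^2)_\infty^2}
\]
modulo~$4$ with the Fourier expansion of a half-integral weight modular form whose coefficients are Hurwitz--Kronecker class numbers $H(\cdot)$, and then to deduce the stated congruences from the action of Hecke operators on these coefficients.

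The first step is a reduction modulo $4$. The elementary observation $(1-q^n)^4\equiv(1-q^{2n})^2\pmod 4$ yields $(q^2;q^2)_\infty^{-2}\equiv(q;q)_\infty^{-4}\pmod 4$. Combining this with Jacobi's identity
\[
(q;q)_\infty^3=\sum_{n\ge 0}(-1)^n(2n+1)q^{\frac{n(n+1)}{2}}
\]
(applied with $q\mapsto q^4$), with Euler's pentagonal number theorem, and with standard eta-quotient manipulations, I expect to arrive at an identity in $\mathbb Z/4\mathbb Z[[q]]$ of the form
\[
\sum_{n\ge 0}\Ec\Oc(n)q^n\equiv\sum_{n\ge 0}h(n)q^n\pmod 4,
\]
where $h(n)$ is a $\mathbb Z/4\mathbb Z$-linear combination of class numbers $H(An+B)$ for fixed constants $A,B$. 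Equivalently, after rescaling by an appropriate power of $q$, the generating function becomes congruent modulo $4$ to a linear combination of Cohen--Eisenstein-type forms of weight $\tfrac32$ on $\Gamma_0(N)$ for some $N$ with $24\mid N$; the modulus $24$ is dictated by the exponent shift $\frac{p_1^2\cdots p_{k+1}^2-1}{3}$ appearing in the statement.

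With this identification in hand, the congruence translates into the mod~$4$ vanishing of certain weighted sums of class numbers. The classical Hurwitz recursion
\[
H(p^2m)=\left(p+1-\leg{-m}{p}\right)H(m)-p\,H\!\left(\tfrac{m}{p^2}\right),
\]
together with the $U_p$--$V_p$ action on the Shimura lift of the class number generating function, forces $H(p^2m)\equiv 2H(m)\pmod 4$ whenever the Legendre-symbol hypothesis of the theorem applies. The two alternative conditions --- namely $p_{k+1}\not\equiv 7,13\pmod{24}$ on the one hand, and $p_{k+1}\equiv 7,13\pmod{24}$ together with $\leg{3j}{p_{k+1}}=-1$ on the other --- encode precisely when an additional Legendre symbol contributes a second factor of $2$, upgrading divisibility by $2$ to divisibility by $4$. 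Summing over the relevant residue classes then establishes the base case $k=0$, and iterating this mechanism across $p_1,\dots,p_{k+1}$ gives the general statement.

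The hardest part will be the first step: producing the explicit mod~$4$ congruence between $\sum\Ec\Oc(n)q^n$ and a class number generating function. This requires combining Jacobi's identity, the pentagonal number theorem, and careful $4$-adic bookkeeping of eta-quotients, and will typically also demand a clever re-indexing that groups pairs of terms so that their contributions combine into multiples of $2$. The subsequent Hecke-theoretic step and the iteration over several primes are, by contrast, reasonably standard once the mod~$4$ identification is in place.
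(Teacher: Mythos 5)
This statement is not proved in the paper at all: it is quoted verbatim from Chen and Chen \cite{Ch-Ch} and used as a black box, the paper's only comment on its proof being that Chen and Chen ``employed the theory of class numbers.'' So there is no internal argument to measure your sketch against; your general direction --- reduce $\sum_{n\ge0}\Ec\Oc(n)q^n$ modulo $4$ to a Hurwitz class number generating function and then exploit arithmetic properties of $H(\cdot)$ --- is at least consistent with the method attributed to \cite{Ch-Ch}.

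As a proof, however, the proposal has genuine gaps. First, the entire analytic half is conjectural: you never derive the claimed congruence $\sum_{n\ge0}\Ec\Oc(n)q^n\equiv\sum_{n\ge0}h(n)q^n\pmod4$ with $h(n)$ a combination of class numbers; ``I expect to arrive at'' is the whole content of that step, and the (correct) reduction $(q^2;q^2)_\infty^{-2}\equiv(q;q)_\infty^{-4}\pmod4$ does not by itself produce it. Second, the one concrete arithmetic deduction you make is false as stated: from the Eichler--Hurwitz relation $H(p^2m)=(p+1-\leg{-m}{p})H(m)-p\,H(m/p^2)$, if $p^2\nmid m$ and $\leg{-m}{p}=\pm1$ one gets $H(p^2m)=(p+1\mp1)H(m)$, an \emph{odd} multiple of $H(m)$, not $2H(m)$; a factor $p+1$ appears only in the ramified case $p\mid m$, and even then $p+1\equiv2\pmod4$ only when $p\equiv1\pmod4$. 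Third, the recursion is aimed at the wrong divisibility: writing $N$ for the argument in \eqref{eq:EOmod4} one computes $3N+1=p_1^2\cdots p_k^2\,p_{k+1}(3p_{k+1}n+3j+p_{k+1})$, so the new prime $p_{k+1}$ divides the relevant quantity to order exactly one when $p_{k+1}\nmid j$; what is needed for $p_{k+1}$ is therefore $4\mid H(\cdot)$ for arguments \emph{exactly} divisible by $p_{k+1}$ --- typically a genus-theory/ramification statement --- while only the squared primes $p_1,\dots,p_k$ are amenable to a $T(p^2)$-type recursion. Finally, the dichotomy between $p_{k+1}\equiv7,13\pmod{24}$ (where the extra condition $\leg{3j}{p_{k+1}}=-1$ is imposed) and the remaining residue classes (where no condition beyond $p_{k+1}\nmid j$ is needed) is asserted to ``encode precisely'' the required second factor of $2$ without any argument; explaining why no Legendre condition is needed in the second case is a substantive part of the theorem. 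Since the paper itself only cites this result, the honest options are to do likewise or to carry out the class-number identification and the divisibility analysis in full; the sketch does neither.
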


We are now ready to prove \Cref{T:mod4congruences}.

\begin{proof}[Proof of \Cref{T:mod4congruences}]
	First recall the third order mock theta function,
	\[
		\nu(q) := \sum_{n\ge0} \left(q;q^2\right)_n(-q)^n =: \sum_{n\ge0} c(n)q^n.
	\]
	Using Lemma \ref{L:oddpolynomial} we have that
	\[
		\sum_{n\ge0} \ou^*(n)q^n - \sum_{n\ge0}(-1)^nc(n)q^{2n+1} = \sum_{n\ge0} \left(-q;q^2\right)_n^2q^{2n+1} - \sum_{n\ge0} \left(-q^2;q^4\right)_nq^{2n+1}
	\]
	is supported on even exponents of $q$ modulo $4$. Therefore 
	\begin{equation} \label{ou^*(2n+1)}
		{\rm ou}^*(2n+1) \equiv (-1)^nc(n) \pmod{4}.
	\end{equation}

	Next we note that by \cite[equation (26.88)]{Fine} we have
	\[
		\sum_{n\ge0} c(2n)q^{2n} = \frac{\left(q^4;q^4\right)_\infty^3}{\left(q^2;q^2\right)_\infty^2}.
	\]
	This is the same product as in \eqref{EOproduct}, and so $c(n)=\Ec\Oc(n)$ if $n$ is even. Therefore the congruences for $\Ec\Oc(n)$ in \Cref{ChenChen} imply congruences for $c(n)$. The argument on the left-hand side of \eqref{eq:EOmod4} is even if and only if $n\equiv j\Pmod2$. So for $j$ even, we let $n\mapsto2n$ and for $j$ odd, we let $n\mapsto2n+1$ to obtain:
	\begin{enumerate}
		\item If $j$ is odd, then
		\[
			c\left(2p_1^2\cdots p_{k+1}^2n+p_1^2\cdots p_k^2p_{k+1}j+\tfrac{4p_1^2\cdots p_{k+1}^2-1}{3}\right) \equiv 0\Pmod4.
		\]
		
		\item If $j$ is even, then
		\[
			c\left(2p_1^2\cdots p_{k+1}^2n+p_1^2\cdots p_k^2p_{k+1}j+\tfrac{p_1^2\cdots p_{k+1}^2-1}{3}\right) \equiv 0\Pmod4.
		\]
	\end{enumerate}
	By \eqref{ou^*(2n+1)} the theorem follows.
\end{proof}

\section{Conclusion}

This paper contains a preliminary investigation of odd unimodal sequences, establishing generating functions, basic asymptotics, and some congruence properties modulo $4$. While other variants of unimodal sequences have arisen in the literature (e.g. \cite{BFW,BJ,KLL}), odd unimodal sequences are perhaps the most natural. Below we leave two ideas for further study. The motivated reader will surely find many more.

First, improve upon the asymptotics in Theorems \ref{T:theo3} and \ref{T:theo2} to find Rademacher-type formulas for ${\rm ou}(n)$ and ${\rm ou}^*(n)$. Up to simple terms the generating function for $\ou(n)$ and is a mixed false theta function while the generating function for $\ou^*(n)$ is a mixed mock theta function.   In both cases the weight is $\frac12$. One could now use the Circle Method to deduce asymptotic formulas for $\ou(n)$ and $\ou^*(n)$. However, an exact formula is out of reach with this method because the weight is too large. To obtain an exact formula one would need to find new methods (like Poincar\'e type series).

Second, it appears that the arithmetic progressions in some of the congruences in \Cref{T:mod4congruences} can be enlarged. For example, with $k=0$ (i.e., in the case of \Cref{T:mod4}) the congruences corresponding to the primes $5,7$, and $11$ are
\begin{align}
	{\rm ou}^*(100n+r) &\equiv 0 \pmod{4} \text{for $r \in \{37,57,77,97\}$}, \label{mod100} \\
	{\rm ou}^*(196n+r) &\equiv 0 \pmod{4} \text{for $r \in \{61,89,145\}$}, \label{mod196} \\
	{\rm ou}^*(484n+r) & \equiv 0 \pmod{4} \text{for $r \in \{125,169,213,257,301,345,389,433,477,521\}$}. \label{mod484}
\end{align}
Computations suggest that the cases $r \in \{37,97\}$ of \eqref{mod100} are special cases of the congruences
\begin{equation*}
	{\rm ou}^*(50n+r) \equiv 0 \pmod{4} \text{for $r \in \{37,47\}$},
\end{equation*}
the cases $r \in \{61,145\}$ of \eqref{mod196} are special cases of the congruences
\begin{equation*}
	{\rm ou}^*(98n + r) \equiv 0 \pmod{4} \text{for $r \in \{47,61\}$},
\end{equation*}
and all of the congruences in \eqref{mod484} are special cases of congruences in the corresponding arithmetic progressions modulo $242$. We leave it as an open problem to establish exactly which of the congruences in Theorem \ref{T:mod4congruences} (or Theorem \ref{T:mod4}) can be strengthened in this way.

%

\appendix
\section{}

Here we prove the second bound from \eqref{E:second2}. We require the relation $\b(x)=\erfc(\sqrt{\pi x})$ and use the bound $\erfc(x)\ll1$. To use \eqref{E:refine}, we write
\begin{align}\nonumber
	\Th^-\left(\tfrac{iz}{2\pi}\right) = &-2\sum_\pm \sum_{\d\in\{0,1\}} (-1)^\d\\
	& \left(\sum_{n_2\ge0} e^{-(n_2+1)^2z}\sum_{n_1\ge0} \b\left(\tfrac8\pi\left(n_1+\tfrac\d2+\tfrac14\right)^2x\right) e^{-12\left(n_1+\frac\d2+\frac14\right)^2z} e^{\mp8\left(n_1+\frac\d2+\frac14\right)(n_2+1)z}\right.\nonumber\\
	\label{eq:ow}
	&\qquad \left.\pm \sum_{n_2\ge0} \b\left(\tfrac{2}{3\pi}(n_2+1)^2x\right)e^{-(n_2+1)^2z} \sum_{n_1\ge0} e^{-12\left(n_1+\frac\d2+\frac14\right)^2z\mp8\left(n_1+\frac\d2+\frac14\right)(n_2+1)z}\right).
\end{align}
	
We write the first term as 
\[
	-2\sum_\pm \sum_{\d\in\{0,1\}} (-1)^\d\sum_{n_2\ge0} e^{-(n_2+1)^2z}\sum_{n_1\ge0} H_{n_2,\pm}\left(\left(n_1+\tfrac\d2+\tfrac14\right)\sqrt x\right),
\]
where
\[
	H_{n_2,\pm}(w_1) := \b\left(\tfrac{8w_1^2}{\pi}\right)e^{-12\frac zxw_1^2\mp8(n_2+1)\frac{z}{\sqrt x}w_1}.
\]
Now \eqref{E:refine} with $N=0$, gives that
\[
	\sum_{n_1\ge0} H_{n_2,\pm}\left(\left(n_1+\tfrac\d2+\tfrac14\right)\sqrt x\right) = \frac{1}{\sqrt x}\int_0^\infty H_{n_2,\pm}(w_1) dw_1 + \Ec\left(\tfrac\d2+\tfrac14;x\right),
\]
where
\begin{equation}\label{E:Ea}
	\Ec(a;x) := -\sum_{k_1\ge0} \frac{H_{n_2,\pm}^{(k_1)}(0)}{(k_1+1)!}a^{k_1+1}x^\frac{k_1}{2} - \frac1{\sqrt x}\int_{a\sqrt x}^\infty H_{n_2,\pm}(w_1)\wt B_0\left(\tfrac{w_1}{\sqrt x}-a\right) dw_1.
\end{equation}
Note that $\wt B_0(x)=1$. The contribution from the main term vanishes (because of the $(-1)^\d$).
	
The first term in the error contributes
\begin{align}\nonumber
	&2\sum_\pm \sum_{\d\in\{0,1\}} (-1)^\d\sum_{n_2\ge0} e^{-(n_2+1)^2z}\sum_{k_1\ge0} \left[\left(\tfrac{\del}{\del w_1}\right)^{k_1} H_{n_2,\pm}(w_1)\right]_{w_1=0} \frac{\left(\frac\d2+\frac14\right)}{(k_1+1)!}^{k_1+1}x^\frac{k_1}{2}\\
	\nonumber
	&= 2\sum_\pm \sum_{\d\in\{0,1\}} (-1)^\d\sum_{k_1\ge0} \frac{\left(\frac\d2+\frac14\right)}{(k_1+1)!}^{k_1+1}x^\frac{k_1}{2}\sum_{\substack{0\le\ell_1,\ell_2\le k_1\\\ell_1+\ell_2=k_1}} \binom{k_1}{\ell_1}\left[\left(\tfrac{\del}{\del w_1}\right)^{\ell_1} \left(\b\left(\tfrac{8w_1^2}{\pi}\right)e^{-12\frac {z}xw_1^2}\right)\right]_{w_1=0}\\
	\label{E:error1}
	&\hspace{8.8cm}\times \sum_{n_2\ge0} \left(\mp8(n_2+1)\frac{z}{\sqrt x}\right)^{\ell_2}e^{-(n_2+1)^2z}.
\end{align}
Now only $\ell_2$ even survive (otherwise the $\pm$ cancels). We now determine the asymptotic behaviors of ($\ell_2$ even)
\[
	\sum_{n_2\ge0}
	(n_2+1)^{\ell_2}
	e^{-(n_2+1)^2z}
	=
	(-1)^{\frac {\ell_2}2} 
	\left(\frac{\partial}{\partial z}\right)^{\frac{\ell_2}{2}}
	\sum_{n\ge1}
	e^{-n^2z}.
\]
For this recall the modular theta function
\[
	\vartheta(\tau) := \sum_{n\in\Z} e^{\pi i n^2 \tau}.
\]
It satisfies
\[
	\vartheta(\tau)=
	(-i\tau)^{-\tfrac12}\vartheta\left(-\tfrac1{\tau}\right).
\]
Thus
\begin{equation}\label{E:Thettran}
	\sum_{n\ge1} e^{-n^2z} = \tfrac12\sum_{n\in\Z} e^{-n^2z} - \tfrac12 = \tfrac12\vth\left(\tfrac{iz}{\pi}\right) - \tfrac12 = \tfrac12\left(\tfrac z\pi\right)^{-\tfrac12}\vth\left(\tfrac{\pi i}{z}\right) - \tfrac12.
\end{equation}
	
The second term contributes to \eqref{E:error1} (it only survives if $\ell_2=0$)
\begin{multline*}
	-\tfrac12\sum_\pm \sum_{\d\in\{0,1\}} (-1)^\d\sum_{k_1\ge0} \frac{\left(\frac \d2+\frac14\right)^{k_1+1}}{(k_1+1)!}x^\frac{k_1}{2}\left[\left(\tfrac{\del}{\del w_1}\right)^{k_1} \left(\b\left(\tfrac{8w_1^2}{\pi}\right)e^{-12\frac zxw_1^2}\right)\right]_{w_1=0}\\
	\ll \sum_{k_1\ge0} \frac{x^\frac{k_1}{2}}{(k_1+1)!}\left(1+\left|\frac zx\right|^{k_1}\right)\left[\left(\tfrac{\del}{\del w_1}\right)^{k_1} \left(\b\left(\tfrac{8w_1^2}{\pi}\right)e^{-12w_1^2}\right)\right]_{w_1=0} \ll 1.
\end{multline*}
	
The first term from \eqref{E:Thettran} contributes to \eqref{E:error1} (noting that $\ell$, $k_1$ need to be even)
\begin{equation}\label{first}
	\ll \sum_{k_1\geq0} \frac{x^{{k_1}}}{(2k_1+1)!}
	\sum_{0\leq\ell\leq k_1}
	\binom{2k_1}{2\ell}
	\left[\left(\tfrac{\partial}{\partial w_1}\right)^{2k_1-2\ell}\left(\b\left(\tfrac{8w_1^2}{\pi}\right)e^{-12\frac zx w_1^2}\right)\right]_{w_1=0} \left(\tfrac{8z}{\sqrt{x}}\right)^{2\ell} \frac{\partial^{2\ell}}{\partial z^{2\ell}} \frac{\vartheta \left(\frac{\pi i}{z}\right)}{z^{\frac12}}.
\end{equation}
Now assume that $\frac{x}{|z|^2}>1$ (which is true as $x\to0$). Then
\begin{equation*}
	z^{2\ell} \frac{\partial^{2\ell}}{\partial z^{2\ell}} \frac{\vartheta \left(\frac{\pi i}{z}\right)}{z^{\frac12}}
	\ll 
	\left[
	\frac{\partial^{2\ell}}{\partial w_2^{2\ell}} 
	\frac{\vartheta \left(\frac{\pi i}{w_2}\right)}{{w_2}^{\frac12}}
	\right]_{w_2=1}.
\end{equation*}
Moreover, as above,
\begin{equation*}
	\left[\left(\tfrac{\del}{\del w_1}\right)^{2k_1-2\ell} \left(\b\left(\tfrac{8w_1^2}{\pi}\right)e^{-12\frac zxw_1^2}\right)\right]_{w_1=0}\ll \left(1+\De^{2k_1-2\ell}\right)\left[\left(\tfrac{\del}{\del w_1}\right)^{2k_1-2\ell} \left(\b\left(\tfrac{8w_1^2}{\pi}\right)e^{-12 w_1^2}\right)\right]_{w_1=0}.
\end{equation*}
This yields that \eqref{first} can be bound against $O(1)$.
Thus the first term in \eqref{E:Ea} is overall $O(1)$.
	
The second term of \eqref{E:Ea} contributes 
\begin{align}\nonumber
	&\tfrac{2}{\sqrt x}\sum_\pm \sum_{\d\in\{0,1\}} (-1)^\d\sum_{n_2\ge0} e^{-(n_2+1)^2z}\int_{\left(\frac\d2+\frac14\right)\sqrt x}^\infty H_{n_2,\pm}(w_1) dw_1\\
	\nonumber
	&\hspace{3cm}= \tfrac{2}{\sqrt x}\sum_\pm \sum_{n_2\ge0} e^{-(n_2+1)^2z}\left(\int_\frac{\sqrt x}{4}^\infty-\int_\frac{3\sqrt x}{4}^\infty\right)H_{n_2,\pm}(w_1)dw_1\\
	\label{E:second}
	&\hspace{3cm}= \tfrac{2}{\sqrt x}\sum_\pm \int_\frac{\sqrt x}{4}^\frac{3\sqrt x}{4} \b\left(\tfrac8{\pi}w_1^2\right) e^{-12\frac zxw_1^2}\sum_{n_2\ge0} e^{-(n_2+1)^2z\mp8(n_2+1)\frac{z}{\sqrt x}w_1} dw_1.
\end{align}
	
The sum on $n_2$ may be written as
\[
	\sum_{n_2\ge0} h_{[1]}\left((n_2+1)\sqrt x\right),
\]
where $h_{[1]}(w_2):=e^{-\frac zxw_2^2\mp8\frac zxw_1w_2}$. Using \eqref{E:refine}, with $N=0$, we have
\[
	\sum_{n_2\ge0} h_{[1]}\left((n_2+1)\sqrt x\right) = \frac{1}{\sqrt x}\int_0^\infty h_{[1]}(w_2) dw_2 + \Ec^{[1]}(x),
\]
where
\begin{equation*}
	\Ec^{[1]}(x):= -\sum_{k_2\ge0}
	\frac{h_{[1]}^{(k_2)}(0)}{(k_2+1)!}
	x^{\frac {k_2}2}
	-\frac1{\sqrt{x}}
	\int_{\sqrt{x}}^\infty
	h_{[1]}(w_2)dw_2.
\end{equation*}
	
The main term contributes to \eqref{E:second} as
\begin{equation*}
	\tfrac2x\sum_\pm \int_\frac{\sqrt x}{4}^\frac{3\sqrt x}{4}\b\left(\tfrac{8w_1^2}{\pi}\right)e^{-12\frac zxw_1^2}\int_0^\infty h_{[1]}(w_2) dw_2dw_1 = \tfrac2x\sum_\pm \int_\frac{\sqrt x}{4}^\frac{3\sqrt x}{4}\b\left(\tfrac{8w_1^2}{\pi}\right)e^{4\frac zxw_1^2}\int_0^\infty e^{-\frac zx(w_2\pm 4w_1)^2} dw_2.
\end{equation*}
Using that $\int_{4w_1}^\infty+\int_{-4w_1}^\infty=2\int_0^\infty$, the above is in $O(\frac{1}{\sqrt z})$. 
	
We next consider the first term in $\Ec^{[1]}(x)$ which contributes
\[
	-\tfrac{2}{\sqrt x}\sum_\pm \int_\frac{\sqrt x}{4}^\frac{3\sqrt x}{4}\b\left(\tfrac{8w_1^2}{\pi}\right)e^{-12\frac zxw_1^2}\sum_{k_2\ge0} \left[\tfrac{\del^{k^2}}{\del w_2^{k^2}} e^{-\frac zxw_2^2\mp8\frac zxw_1w_2}\right]_{w_2=0} dw_1\frac{x^\frac{k_2}{2}}{(k_2+1)!}.
\]
We have 
\[
	\left[\pd{w_2}{k_2} e^{-\frac zxw_2^2\mp8\frac zxw_1w_2}\right]_{w_2=0} = \sum_{\ell=0}^k \binom{k_2}{\ell}\left(\mp8\frac zxw_1\right)^\ell\leg{\sqrt z}{\sqrt x}^{k_2-\ell}\left[\pd{w_2}{k_2-\ell} e^{-w_2^2}\right]_{w_2=0}.
\]
Now the $\pm$ enforces $\ell$, $k_2-\ell$ to be even because of the $\pm$ and bound overall against $O(\frac{1}{\sqrt z})$.
	
We next consider the second term in $\Ec^{[1]}(x)$. This contributes 
\begin{equation}\label{E:contributes}
	\tfrac1x \int_\frac{\sqrt x}{4}^\frac{3\sqrt x}{4} \b\left(\tfrac{8w_1^2}{\pi}\right)e^{-12\frac zxw_1^2}\int_{\sqrt x}^\infty e^{-\frac zxw_2^2\mp8\frac zxw_1w_2} dw_2 dw_1.
\end{equation}
We write 
\[
	e^{-12\frac zxw_1^2}\int_{\sqrt x}^\infty e^{-\frac zxw_2^2\mp8\frac zxw_1w_2} dw_2 \ll e^{4\frac zxw_1^2}\int_{\sqrt x}^\infty e^{-\frac zx(w_2\pm4w_1)^2} dw_2 \ll e^{4w_1^2}\int_\R e^{-w_2^2} dw_2.
\]
Thus \eqref{E:contributes} may be bound against $O(\frac{1}{\sqrt z})$. Combining gives that the first term is $O(\frac{1}{\sqrt z})$.
	
We next turn to the second term in \eqref{eq:ow} and proceed as before, again first considering the sum on $n_1$. We have
\[
	\sum_{n_1\ge0} e^{-12\left(n_1+\frac\d2+\frac14\right)^2z\mp8\left(n_1+\frac\d2+\frac14\right)(n_2+1)z} = \sum_{n_1\ge0} G_{n_2,\pm}\left(\left(n_1+\tfrac\d2+\tfrac14\right)\sqrt z\right),
\]
where $G_{n_2,\pm}(w_1):=e^{-12w_1^2\mp8(n_2+1)\sqrt zw_1}$. Now \eqref{E:refine} gives (with $N=0$) that
\[
	\sum_{n_1\ge0} G_{n_2,\pm}\left(\left(n_1+\tfrac\d2+\tfrac14\right)\sqrt z\right) = \frac{1}{\sqrt z}\int_0^\infty G_{n_2,\pm}(w_1) dw_1 + E\left(\tfrac \d2+\tfrac14;z\right),
\]
where 
\[
	E(a;z) := -\sum_{k_1\ge0} \frac{G_{n_2,\pm}^{(k_1)}(0)}{(k_1+1)!}a^{k_1+1}z^\frac{k_1}{2} - \frac{1}{\sqrt z}\int_{a\sqrt{z}}^{\sqrt z\infty} G_{n_2,\pm}(w_1)\wt B_0\left(\tfrac{w_1}{\sqrt z}-a\right) dw_1.
\]
	
The main term contributes overall
\[
	-\tfrac{2}{\sqrt z}\sum_\pm \pm \sum_{\d\in\{0,1\}} (-1)^\d\sum_{n_2\ge0} \b\left(\tfrac{2}{3\pi}(n_2+1)^2x\right)e^{-(n_2+1)^2z}\int_0^\infty G_{n_2,\pm}(w_1) dw_1 = 0.
\]
	
The first term in the error $E(\frac \d2 +\frac14;z)$ contributes
\begin{multline*}
	2\sum_\pm \pm\sum_{\d \in \{0,1\}}(-1)^\d \sum_{n_2\ge0} \b\left(\tfrac{2}{3\pi}(n_2+1)^2x\right)e^{-(n_2+1)^2z}\sum_{k_1\ge0} \left[\left(\tfrac{\del}{\del w_1}\right)^{k_1} G_{n_2,\pm}(w_1)\right]_{w_1=0}\\
	\times \frac{\left(\frac\d2+\frac14\right)^{k_1+1}}{(k_1+1)!}z^\frac{k_1}{2} = 2\sum_\pm \pm \sum_{\d\in\{0,1\}} (-1)^\d\sum_{k_1\ge0} \left(\tfrac\d2+\tfrac14\right)^{k_1+1}\frac{z^\frac{k_1}{2}}{(k_1+1)!}\\
	\times \left[\left(\tfrac{\del}{\del w_1}\right)^{k_1} \left(e^{-12w_1^2}\sum_{n_2\ge0} \b\left(\tfrac{2}{3\pi}(n_2+1)^2x\right)e^{-(n_2+1)^2z\mp8(n_2+1)\sqrt zw_1}\right)\right]_{w_1=0}.
\end{multline*}
We write the sum on $n_2$ as
\[
	\sum_{n_2\ge0} g_{[1],\pm}\left((n_2+1)\sqrt x\right),
\]
where $g_{[1],\pm}(w_2) := \b(\tfrac{2w_2^2}{3\pi})e^{-\frac {z}xw_2^2\mp8\frac{\sqrt z}{\sqrt x}w_1w_2}$.
Now \eqref{E:refine}, with $N=0$, gives that
\[
	\sum_{n_2\ge0} g_{[1],\pm}\left((n_2+1)\sqrt x\right) = \frac{1}{\sqrt x}\int_0^\infty g_{[1],\pm}(w_2) dw_2 + E^{[1]}(x),
\]
where
\[
	E^{[1]}(x) := -\sum_{k_2\ge0} \frac{g_{[1],\pm}^{(k_2)}(0)}{(k_2+1)!}x^\frac{k_2}{2} - \frac1{\sqrt{x}}\int_{\sqrt x}^\infty g_{[1],\pm}(w_2)\wt B_0\left(\tfrac{w_2}{\sqrt x}-1\right) dw_2.
\]
	
The main term contributes
\begin{equation}\label{E:main}
	\tfrac{2}{\sqrt x}\sum_\pm \pm \sum_{\d\in\{0,1\}} (-1)^\d\sum_{k_1\ge0} \left(\tfrac\d2+\tfrac14\right)^{k_1+1}\frac{z^\frac{k_1}{2}}{(k_1+1)!} \left[\left(\tfrac{\del}{\del w_1}\right)^{k_1} \left(e^{-12w_1^2}\int_0^\infty g_{[1],\pm}(w_2) dw_2\right)\right]_{w_1=0}\hspace{-0.1cm}.\hspace{-0.2cm}
\end{equation}
We rewrite
\begin{multline*}
	\left[\left(\tfrac{\del}{\del w_1}\right)^{k_1} \left(e^{-12w_1^2}\int_0^\infty g_{[1],\pm}(w_2) dw_2\right)\right]_{w_1=0}\\
	=\sum_{j=0}^{k_1} \pmat{k_1\\j}\left[\left(\tfrac{\del}{\del w_1}\right)^{k_1-j} e^{-12w_1^2}\right]_{w_1=0}\int_0^\infty \b\left(\tfrac{2w_2^2}{3\pi}\right)e^{-\frac zxw_2^2}\left(\mp8\sqrt{\tfrac zx}w_2\right)^j dt_2.
\end{multline*}
The $\pm$ forces $j$ to be even. Also $k_1$ is even. Thus \eqref{E:main} is $O(\frac{1}{\sqrt z})$.

The first term from $E^{[1]}$ contributes
\begin{multline*}
	2\sum_\pm \pm \sum_{\d\in\{0,1\}} (-1)^\d\sum_{k_1\ge0} \left(\tfrac\d2+\tfrac14\right)^{k_1+1} \Bigg[\left(\tfrac{\del}{\del w_1}\right)^{k_1}\\
	\Bigg(e^{-12w_1^2} \sum_{k_2\ge0} \left[\left(\tfrac\del{\del w_2}\right)^{k_2} \left(g_{[1],\pm}(w_2)\right)\right]_{w_2=0}\Bigg)\Bigg]_{w_1=0}\frac{z^\frac{k_1}{2}}{(k_1+1)!}\frac{x^\frac{k_2}{2}}{(k_2+1)!} \ll 1.
\end{multline*}
	
The second term from $E^{[1]}$ contributes
\begin{equation*}
	\tfrac{2}{\sqrt x}\sum_\pm \pm \sum_{\d\in\{0,1\}}(-1)^\d \sum_{k_1\ge0} \left(\tfrac\d2+\tfrac14\right)^{k_1+1}\frac{z^\frac{k_1}{2}}{(k_1+1)!}	{\vphantom{\left(\frac{\del}{\del w_2}\right)^{k_2}}} \left[\left(\tfrac{\del}{\del w_1}\right)^{k_1}\left(e^{-12w_1^2}\int_{\sqrt x}^\infty g_{[1],\pm}(w_2) dw_2\right)\right]_{w_1=0} .
\end{equation*}
	
We compute 
\begin{equation*}
	\left[
	\left(\tfrac{\del}{\del w_1}\right)^{k_1}
	e^{-12w_1^2\mp 8\frac{\sqrt{z}}{\sqrt{x}}w_1w_2}
	\right]_{w_1=0}
	=
	\sum_{\ell=0}^{k_1}
	\binom{k_1}{\ell}
	\left[
	\left(\tfrac{\del}{\del w_1}\right)^{k_1-\ell}
	e^{-12w_1^2}
	\right]_{w_1=0}
	\left(\mp 8\tfrac{\sqrt{z}}{\sqrt{x}}w_2\right)^\ell.
\end{equation*}
Arguing as above we need $\ell$, $k_1$ to be even and obtain overall $O(\frac{1}{\sqrt z})$.

The second term in the error $E(\frac\d2+\frac14;z)$ contributes
\begin{multline*}
	\tfrac{2}{\sqrt z}\sum_\pm \pm \sum_{\d\in\{0,1\}} (-1)^\d\sum_{n_2\ge0} \b\left(\tfrac{2}{3\pi}(n_2+1)^2x\right)e^{-(n_2+1)^2z}\int_{\left(\frac\d2+\frac14\right)\sqrt z}^{\sqrt z\infty} e^{-12w_1^2\mp8(n_2+1)\sqrt zw_1} dw_1\\
	= \tfrac{2}{\sqrt z}\sum_\pm \pm \sum_{n_2\ge0} \b\left(\tfrac{2}{3\pi}(n_2+1)^2x\right)e^{-(n_2+1)^2z}\int_\frac{\sqrt z}{4}^\frac{3\sqrt z}{4} e^{-12w_1^2\mp8(n_2+1)\sqrt zw_1} dw_1.
\end{multline*}
	
Now the sum on $n_2$ is 
\[
	\sum_{n_2\ge0} g_{[2]}\left((n_2+1)\sqrt x\right),
\]
where
\[
	g_{[2]}(w_2) := \b\left(\tfrac{2w_2^2}{3\pi}\right)e^{-\frac zxw_2^2\mp8\frac{\sqrt z}{\sqrt x}w_1w_2}.
\]
Thus \eqref{E:refine}, with $N=0$, gives
\[
	\sum_{n_2\ge0} g_{[2]}\left((n_2+1)\sqrt x\right) = \frac{1}{\sqrt x}\int_0^\infty g_{[2]}(t_2) dt_2 + E^{[4]}(x),
\]
where
\[
	E^{[4]}(x) := -\sum_{k_2\ge0} \frac{g_{[2]}^{(k_2)}(0)}{(k_2+1)!}x^\frac{k_2}{2} - \frac{1}{\sqrt x}\int_{\sqrt x}^{\infty} g_{[2]}(w_2)\wt B_0\left(\tfrac{w_2}{\sqrt x}-1\right) dw_2.
\]
	
The main term contributes
\[
	\tfrac{2}{\sqrt z\sqrt x}\sum_\pm \pm \int_\frac{\sqrt z}{4}^\frac{3\sqrt z}{4} e^{-12w_1^2}\int_0^\infty g_{[2]}(w_2) dw_2 dw_1.
\]
We write 
\[
	e^{-12w_1^2}\int_0^\infty g_{[2]}(w_2) dw_2 = e^{4w_1^2}\int_0^\infty \b\left(\tfrac{2w_2^2}{3\pi}\right)e^{-\frac zx\left(w_2\mp4\frac{\sqrt x}{\sqrt z}\right)^2} dw_2 \ll e^{4w_1^2}.
\]
Thus we have overall $O(\frac{1}{\sqrt z})$.
	
The first term in the error $E^{[4]}$ contributes
\[
	-\tfrac{2}{\sqrt z}\sum_\pm \pm \int_\frac{\sqrt z}{4}^\frac{3\sqrt z}{4} e^{-12w_1^2}\sum_{k_2\ge0} \left[\left(\tfrac\del{\del w_2}\right)^{k_2} \b\left(\tfrac{2w_2^2}{3\pi}\right) e^{-\frac zxw_2^2\mp8\frac{\sqrt z}{\sqrt x}w_1w_2}\right]_{w_2=0}\frac{x^\frac{k_2}{2}}{(k_2+1)!}.
\]
We bound, since $|z|\leq (1+\Delta)x$,
\begin{equation*}
	\left[\left(\tfrac\del{\del w_2}\right)^{k_2} \left(\b\left(\tfrac{2w_2^2}{3\pi}\right)e^{-\frac zxw_2^2\mp8\frac{\sqrt z}{\sqrt x}w_1w_2}\right)\right]_{w_2=0}	\ll \left(1+\De^\frac{k_2}{2}\right)\left[\left(\tfrac\del{\del w_2}\right)^{k_2} \b\left(\tfrac{2w_2^2}{3\pi}\right)e^{-w_2^2\mp8w_1w_2}\right]_{w_2=0}.
\end{equation*}
Then overall we have 
\[
	\ll \tfrac{1}{\sqrt z}\sum_{k_2\ge0} \left(1+\De^\frac{k_2}{2}\right)\left[\left(\tfrac\del{\del w_2}\right)^{k_2} \left(\b\left(\tfrac{2w_2^2}{3\pi}\right)e^{-w_2^2}\int_\frac{\sqrt z}{4}^\frac{3\sqrt z}{4} e^{-12w_1^2\mp8w_1w_2} dw_1\right)\right]_{w_2=0}\frac{x^\frac{k_2}{2}}{(k_2+1)!} \ll 1.
\]

The second term in the error $E^{[4]}$ can be estimated as before as $O(\frac{1}{\sqrt z})$.

\end{document}